\newtheorem{theorem}{Theorem}[section]
\newtheorem{lemma}[theorem]{Lemma}
\newtheorem{proposition}[theorem]{Proposition}
\newtheorem{definition}[theorem]{Definition}
\newtheorem{remark}[theorem]{Remark}
\begin{document}

\title{An Infinite-Dimensional Variational Inequality Formulation and Existence Result for Dynamic User Equilibrium with Elastic Demands}
\author{Ke Han$^{a}\thanks{Corresponding author, e-mail: kxh323@psu.edu}$
\qquad Terry L. Friesz$^{b}\thanks{e-mail: tfriesz@psu.edu}$
\qquad Tao Yao$^{b}\thanks{e-mail: tyy1@engr.psu.edu}$ \\\\
$^{a}$\textit{Department of Mathematics, }\\
\textit{Pennsylvania State University, University Park, PA 16802, USA}\\
$^{b}$\textit{Department of  Industrial and Manufacturing Engineering, }\\
\textit{Pennsylvania State University, University Park, PA 16802, USA}}
\date{}
\maketitle

\begin{abstract}
\noindent This paper is concerned with {\it dynamic user equilibrium} (DUE) with elastic travel demand (E-DUE). We present and prove a {\it variational inequality} (VI) formulation of E-DUE using measure-theoretic argument. Moreover,  existence  of the E-DUE is formally established  with a version of Brouwer's fixed point theorem in a properly defined Hilbert space. The existence proof requires the effective delay operator to be continuous, a regularity condition also needed to ensure the existence of DUE with fixed demand \citep{existence}. Our proof  does not invoke the {\it a priori} upper bound of the departure rates (path flows). 

\medskip

\noindent \textbf{Keywords:} \textit{dynamic user equilibrium; elastic demand; variational inequalities; differential variational inequalities; existence}
\end{abstract}

\section{\label{Intro}Introduction}

This paper is concerned with an elastic demand extension of the fixed-demand dynamic
traffic assignment model originally presented in \cite{Friesz1993} and
discussed subsequently by \cite{FM2006, FKKR, FHNMY, FM2013}. As such, it is
concerned with a specific type of dynamic traffic assignment known as
dynamic user equilibrium (DUE) for which travel cost, including delay as
well as early and late arrival penalties, are equilibrated and demand is
determined endogenously.

\subsection{Introductory remarks on dynamic user equilibrium}

In the past two decades there have been many efforts to develop a theoretically sound formulation of dynamic user equilibrium especially in continuous time. As is pointed out by \cite{FKKR}, DUE models tend to be comprised of five essential components:
\begin{enumerate}
\item a model of path delay;
\item flow dynamics;
\item flow propagation constraints;
\item a route and departure time choice model; and
\item a model of demand growth (evolution).
\end{enumerate}

Dynamic user equilibrium models from the early 1990s forward have been largely concerned with the sub-models 1 through 4 above, concentrating on the within-day time scale for which travelers make routing and departure time decisions. Items 1 through 3 above form a sub-model known as the {\it dynamic network loading} (DNL), which determines  arc- and/or path-specific volumes and flows as well as experienced path delays when departure rates (path flows) are known for each path. Item 4 aims at expressing mathematically the notion of Nash-like equilibrium conditions. Item  5, demand evolution, occurs on the day-to-day time scale and allows travel demands to be updated.

There are multiple means of expressing the Nash-like notion of a dynamic user equilibrium, including the following:
\begin{itemize}
\item[(i)] an infinite-dimensional variational inequality \citep{Friesz1993, SW1, SW2}
\item[(ii)] a nonlinear complementarity problem \citep{WTC, HUD}
\item[(iii)] a differential variational inequality \citep{Friesz2001, FHNMY, FKKR, FM2006}
\item[(iv)] a differential complementarity system \citep{Pang}
\end{itemize}

The variational inequality representation  is presently the primary  mathematical form employed for dynamic user equilibrium. One of the advantages of such variational inequality formulation is that it subsumes almost all simultaneous route-and-departure time choice DUE models regardless of the dynamic network loading procedure employed. In fact, the arc dynamics and flow propagation constraints can be naturally embedded in the {\it effective delay operator} which is viewed as a mapping between two Hilbert spaces. However, it would be a mistake to think that somehow the VI formulation is an ``easier" formulation since the effective delay operators are generally not knowable in closed form; in fact, the delay operators may be non-analytic and may need to be derived from an embedded delay model, data combined with response surface methodology, or data combined with inverse modeling. Analytical results of the effective delay operator is crucial for understanding qualitative properties of dynamic user equilibrium such as existence, uniqueness and convergence of certain computational schemes. The existence of DUE requires that the delay operator is continuous, which is a consequence of a generalization of Brouwer's fixed point theorem \citep{Browder}. In addition, the uniqueness of a DUE solution further requires that the delay operator is strongly monotone \citep{Nagurney}. \cite{FM2006} propose and test a fixed point algorithm implemented in continuous time to solve the differential variational inequality (DVI) formulation of DUE \citep{Friesz2001}; that algorithm requires monotonic effective delay operators to assure convergence.

\subsection{Some history of DUE with elastic demand}

In this section we review some of the few prior efforts to model DUE with
elastic travel demand. This review is based in part on \cite{FM2013}. Most of the studies of DUE reported in the DTA literature are about
dynamic user equilibrium with constant travel demand for each
origin-destination pair. We denote such a dynamic user equilibrium with fixed demand by F-DUE. It is, of course, not generally true that travel
demand is fixed, even for short time horizons. \cite{ADL} and \cite{YH} directly consider elastic travel demand; their work possesses a
limited relationship to the analysis presented in this paper, for their work
is concerned with a simple bottleneck instead of a nontrivial network, which
is our focus.

\cite{YM} extend a simple bottleneck model to a general queuing
network with known elastic demand functions for each origin-destination (OD)
pair. \cite{WTC} study a version of dynamic user equilibrium with
elastic demand, using a complementarity formulation that requires path
delays to be expressible in closed form. \cite{SL} study dynamic
user equilibrium with elastic travel demand when network loading is based on
the {\it cell transmission model} (CTM); their formulation is based on discrete
time and is expressed as a finite-dimensional variational inequality (VI).
\cite{HUD} study dynamic user equilibrium with elastic travel demand
for a network whose traffic flows are also described by CTM.

Although \cite{FKKR} show that analysis and computation of dynamic
user equilibrium with constant travel demand is tremendously simplified by
stating it as a a differential variational inequality (DVI), they do not
discuss how elastic demand may be accommodated within a DVI framework. \cite{FM2013} extend the DVI formulation to an elastic demand setting. Such a DVI formulation of elastic demand DUE is not straightforward. In particular, the DVI presented therein has both infinite-dimensional and finite-dimensional terms; moreover, for any given origin-destination pair,
inverse travel demand corresponding to a dynamic user equilibrium depends on
the terminal value of a state variable representing cumulative departures. The DVI formulation achieved in that paper is significant because it allows
the still emerging theory of differential variational inequalities to be
employed for the analysis and computation of solutions of the elastic-demand
DUE problem when simultaneous departure time and route choice are within the
purview of users, all of which constitutes a foundation problem within the
field of dynamic traffic assignment.

A good review of recent insights into
abstract differential variational inequality theory, including computational
methods for solving such problems, is provided by \cite{PS}.
Also, differential variational inequalities involving the kind of explicit,
agent-specific control variables employed herein are presented in \cite{DODG}.

\subsection{Main results}

This paper further advances the knowledge of continuous-time E-DUE based on \cite{FM2013} in terms of formulation and qualitative properties. In particular, we state and prove an equivalent variational inequality (VI) formulation of the E-DUE problem. The proof uses a measure-theoretic argument, and does not invoke the optimal control framework which is the primary methodology employed by \cite{FM2013} to establish the DVI formulation.  Such a result is presented  in Theorem \ref{vielasticthm}.

As an immediate application of the proposed variational inequality formulation, we will  analyze and establish existence result for the E-DUE problem. As commented in \cite{existence}, the most obvious approach to establishing existence is to convert the problem to an equivalent variational inequality problem and then apply a version of Brouwer's fixed point existence theorem. Nearly all proofs of DUE existence employ Brouwer's fixed point theorem, either implicitly or explicitly.  One statement of Brouwer's theorem appears as Theorem 2 of \cite{Browder}. Approaches based on Brouwer's theorem require the set of feasible path flows under consideration to be compact and convex in a topological vector space, and typically involve the {\it a priori} bound on path flows. For instance, using the {\it link delay model} introduced by \cite{Friesz1993}, \cite{ZM} show that a {\it route choice} (RC) dynamic user equilibrium exists under certain regularity conditions. In their modeling framework, the departure rate at each origin is given as {\it a priori} and assumed to be bounded from above. Thus one is assured that all path flows are automatically uniformly bounded. In \cite{WTC}, the existence of an arc-based user equilibrium is established under the assumption that the path flows are {\it a priori} bounded.

Difficulties arise in the proof of a general existence theorem from two aspects: (i) in a continuous-time setting, the set of feasible path flows is not compact; and (ii) the assumption  of {\it a priori} boundedness of path flows, which is usually required for a topological argument, does not arise from any behavioral argument or theory. The existence proof provided by this paper manages to overcome these two major difficulties.  Regarding item (i) above, we employ successive finite-dimensional approximations of the feasible path flows set, which allows Brouwer's fixed point theorem to be applied. Regarding item (ii), we propose an in-depth analysis and computation of the path flows under minor assumptions on the travelers' disutility functions.

Existence result for the elastic demand case is further complicated by the fact that the corresponding VI formulation has both infinite-dimensional and finite-dimensional terms (see Theorem \ref{vielasticthm} below). In order to apply Browder's theorem \citep{Browder}, one needs to work in an extended Hilbert space that is a product of an infinite-dimensional space and a finite-dimensional space, and define appropriate inner product that allows compactness and weak topology to be properly defined. It is significant that our existence result for E-DUE, stated and proved in Theorem \ref{existencethm}, does not rely on the {\it a priori} upper bound of path flows and can be established for any dynamic network loading sub-model with reasonable and weak regularity conditions.

\subsection{Organization} The rest of this paper is organized as follows. Section \ref{secbackground} recaps several key notations and concepts that are essential for the subsequent discussion. In Section \ref{secvielastic}, we  present one of the main results of this paper: the variational inequality (VI) formulation of the simultaneous route-and-departure choice dynamic user equilibrium with elastic travel demand (E-DUE). Section \ref{secdvi} briefly reviews the differential variational inequality formulation proposed by \cite{FM2013}. Section \ref{secexistenceedue} establishes  the existence result for E-DUE based on the VI formulation proposed in Section \ref{secvielastic}. The proof overcomes several difficulties known to other researchers, including non-compactness of the feasible set and the {\it a priori} boundedness of path flows.

\section{Notation and essential background}\label{secbackground}

The time interval of analysis is a single commuting period or
\textquotedblleft day\textquotedblright\ expressed as 
\begin{equation*}
\left[ t_{0},t_{f}\right] \subset \Re ^{1}
\end{equation*}%
where $t_{f}>t_{0}$, and both $t_{0}$ and $t_{f}$ are fixed. Here, as in all
DUE modeling, the single most crucial ingredient is the path delay operator,
which provides the delay on any path $p$ per unit of flow departing from the
origin of that path; it is denoted by%
\begin{equation*}
D_{p}(t,h)\text{ \ \ \ }\forall p\in \mathcal{P}
\end{equation*}%
where $\mathcal{P}$ is the set of all paths employed by travelers, $t$
denotes departure time, and $h$ is a vector of departure rates. From these,
we construct effective unit path delay operators $\Psi _{p}(t,h)$ by adding
the so-called schedule delay $f\left( t+D_{p}(t,h)-T_{A}\right) $; that is%
\begin{equation}\label{cost}
\Psi _{p}(t,h)=D_{p}(t,h)+f\left( t+D_{p}(t,h)-T_{A}\right) \qquad \forall p\in \mathcal{P}
\end{equation}%
where $T_{A}$ is the desired arrival time and $T_{A}<t_{f}$. The function $%
f\left( \cdot \right) $ assesses a penalty whenever%
\begin{equation}
t+D_{p}(t,h)\neq T_{A}  \label{pgt}
\end{equation}%
since $t+D_{p}(t,h)$ is the clock time at which departing traffic arrives at
the destination of path $p\in \mathcal{P}$. We stipulate that each
\begin{equation*}
\Psi _{p}(\cdot ,h):\left[ t_{0},t_{f}\right]~ \longrightarrow ~\Re _{++}^{1}%
\qquad \forall p\in \mathcal{P}
\end{equation*}%
is measurable and strictly positive. We employ the obvious notation
\begin{equation*}
\big( \Psi _{p}(\cdot ,h):p\in \mathcal{P}\big) \in \Re ^{|\mathcal{P}|}
\end{equation*}
to express the complete vector of effective delay operators.

It is now well known that path delay operators may be obtained from an
embedded delay model, data combined with response surface methodology, or
data combined with inverse modeling. Unfortunately, regardless of how
derived, realistic path delay operators do not possess the desirable
property of monotonicity; they may also be non-differentiable.

\subsection{Review on dynamic user equilibrium with fixed demand (F-DUE)}
For the completeness of our presentation, in this section we recap  the definition of DUE with fixed travel demand, originally articulated by \cite{Friesz1993}. 

Let us introduce the fixed trip matrix $\big(Q_{ij}: (i,\,j)\in\mathcal{W}\big)$, where each $Q_{ij}\in \Re _{++}$ is the fixed travel demand between origin-destination pair $\left( i,j\right) \in \mathcal{%
W}$, where $\mathcal{W}$ is the set of origin-destination pairs. Note that $Q_{ij}$ represents traffic volume, not flow. Finally we let $\mathcal{P}_{ij}\subset \mathcal{P}$ be the set of
paths connecting origin-destination pair $\left( i,j\right) \in \mathcal{W}$.  As mentioned earlier, $h$ is the vector of path flows $h=\{h_p: p\in\mathcal{P}\}$. 
We denote the space of square integrable functions on the real interval $[t_0,\,t_f]$ by $L^2[t_0,\,t_f]$. We stipulate that each path flow is square integrable, that is
$$
h\in\big(L_+^2[t_0,\,t_f]\big)^{|\mathcal{P}|}
$$
where $\big(L_+^2[t_0,\,t_f]\big)^{|\mathcal{P}|}$ is the positive cone of  the $|\mathcal{P}|$-fold product of the Hilbert space $L^2[t_0,\,t_f]$. Each element $h=(h_{p}:p\in \mathcal{P})\in \big(L_+^2[t_0,\,t_f]\big)^{|\mathcal{P}|}$ is interpreted as a vector of departure-time densities, or more simply path flows, measured at the entrance of the first arc of the relevant path.
It will be seen that these departure time densities are defined only up to a
set of measure zero. With this in mind, let $\nu $ denote a Lebesgue measure
on $[t_{0},\,t_{f}]$, and for each measurable set $S\subseteq %
[t_{0},\,t_{f}] $, let $\forall _{\nu }(t\in S)$ denote the phrase
\emph{for $\nu $-almost all $t\in S$}. If $S=[t_{0},\,t_{f}]$,
then we may at times simply write $\forall _{\nu }(t)$.

We write the flow conservation constraints as
\begin{equation}\label{cons}
\sum_{p\in P_{ij}}\int_{t_0}^{t_f}h_p(t)\,dt~=~Q_{ij}\qquad\forall (i,\,j)\in\mathcal{W}
\end{equation}
where \eqref{cons} consists of Lebesgue integrals. \footnote{The demand satisfaction expressed via Lebesgue integrals is not enough to assure that the path flows are bounded. This has been the major hurdle to proving existence without the {\it a priori} invocation of bounds on path flows.} Using the notation
and concepts we have mentioned, the feasible region for DUE when effective
delay operators are known is
\begin{equation}\label{chapVI:lambda}
\Lambda_0~=~\left\{ h\geq 0:\sum_{p\in \mathcal{P}_{ij}}%
\int_{t_{0}}^{t_{f}}h_{p}\left( t\right) dt=Q_{ij}\text{ \ \ \ }\forall
\left( i,j\right) \in \mathcal{W}\right\} \subseteq \left( L_{+}^{2}\left[
t_{0},t_{f}\right] \right) ^{\left\vert \mathcal{P}\right\vert }
\end{equation}

\noindent In order to define an appropriate concept of minimum travel costs in the present context, we require the measure-theoretic analog of the infimum of a set of numbers. In particular,  for any measurable set $S\subseteq [t_0,\,t_f]$ with $\nu(S)>0$, and any measurable function $f: S\rightarrow \Re$, the {\it essential infimum} of $f(\cdot)$ on $S$ is given by
\begin{equation}\label{chapVI:essinf}
\hbox{essinf}\left\{f(s):~s\in S\right\}~=~\sup\left\{ x\in\Re:~\nu\{s\in S:~f(s)<x\}~=~0\right\}
\end{equation}
Note that for each $x>\hbox{essinf}\{f(s):~s\in S\}$ it must be true by definition that 
$$
\nu\{s\in S:~f(s)<x\}>0
$$

\noindent Let us  define the essential infimum of effective travel delays
$$
v_p~=~v_p(h)~=~\hbox{essinf}\left\{\Psi_p(t,\,h):~t\in[t_0,\,t_f]\right\}~\geq~0\qquad\forall p\in\mathcal{P}
$$
$$
v_{ij}~=~v_{ij}(h)~=~\min\left\{v_p(h): ~p\in \mathcal{P}_{ij}\right\} \qquad \forall \left( i,j\right) \in \mathcal{W}
$$

\noindent The following definition of dynamic user equilibrium was first articulated by \cite{Friesz1993}:
\begin{definition}\label{duedef}
{\bf (Dynamic user equilibrium)} A vector of departure rates (path
flows) $h^{\ast }\in \Lambda_0$ is a dynamic user equilibrium if
\begin{align}\label{chapVI:comp}
h_{p}^{\ast }\left( t\right) >0,~p\in \mathcal{P}_{ij}~~\Longrightarrow~~ \Psi _{p}(t,\,h^{\ast }) ~=~v_{ij}(h^*)\qquad & \forall_{\nu}(t)\in[t_0,\,t_f], \qquad\forall (i,\,j)\in\mathcal{W}
\\
 \label{chapVI:nobetter}
\Psi_p(t,\,h^*)~\geq~v_{ij}(h^*) \qquad & \forall_{\nu}(t)\in[t_0,\,t_f], \qquad \forall (i,\,j)\in\mathcal{W}
\end{align}
We denote this equilibrium by $DUE\left( \Psi,\,\Lambda_0,\, [
t_{0},t_{f}] \right)$.
\end{definition}

Using measure theoretic arguments, \cite{Friesz1993} establish that
a dynamic user equilibrium is equivalent to the following variational
inequality under suitable regularity conditions:
\begin{equation} \label{duevi}
\left. 
\begin{array}{c}
\text{find }h^{\ast }\in \Lambda _{0}\text{ such that} \\ 
\sum\limits_{p\in \mathcal{P}}\displaystyle \int\nolimits_{t_{0}}^{t_{f}}\Psi _{p}(t,h^{\ast
})(h_{p}-h_{p}^{\ast })dt\geq 0 \\ 
\forall h\in \Lambda _{0}%
\end{array}%
\right\} VI(\Psi ,\Lambda_0 ,\left[ t_{0},t_{f}\right] ) 
\end{equation}%

\subsection{Dynamic user equilibrium with elastic demand (E-DUE)}
The general setup of DUE with elastic demand is similar to that of the fixed demand case, with the exception that the total travel demand $Q_{ij}$ between an origin-destination pair $(i,\,j)\in\mathcal{W}$ is no longer a prescribed constant.  Rather, transportation demand is assumed to be expressed as the following invertible function
$$
Q_{ij}~=~F_{ij}[v]
$$
for each origin-destination pair $(i,\,j)\in\mathcal{W}$, where $\mathcal{W}$ is the set of all origin-destination pairs and $v$ is a concatenation of origin-destination minimum travel cost $v_{ij}$ associated with $(i,\,j)\in\mathcal{W}$. That is, we have that
\begin{align*}
v_{ij}&~\in~\Re_{++}\\
v&~=~\big(v_{ij}:~(i,\,j)\in\mathcal{W}\big)\in\Re_+^{|\mathcal{W}|}
\end{align*}

\noindent Note that to say $v_{ij}$ is a minimum travel cost means it is the minimum
cost for all departure time choices and all route choices pertinent to
origin-destination pair $\left( i,j\right) \in \mathcal{W}$. Further note
that $Q_{ij} $ is the unknown cumulative travel demand
between $\left( i,j\right) \in \mathcal{W}$ that must ultimately arrive by
time $t_{f}$.

We will also find it convenient to form the complete vector of travel
demands by concatenating the origin-specific travel demands to obtain%
$$
Q ~=~\left( Q_{ij} :~( i,\, j) \in \mathcal{W}%
\right) \in \Re_+^{\left\vert \mathcal{W}\right\vert } 
$$
$$
F:~\Re _{++}^{\left\vert \mathcal{W}\right\vert }\longrightarrow \Re
_{+}^{\left\vert \mathcal{W}\right\vert },\qquad v~\mapsto~Q
$$
The inverse demand function for every $\left( i,j\right) \in \mathcal{W}$ is%
\begin{equation*}
v_{ij}=\Theta _{ij}\left[ Q \right] 
\end{equation*}%
and we naturally define%
\begin{equation*}
\Theta[Q] ~=~\big( \Theta _{ij}[Q]:\left( i,j\right) \in \mathcal{W}\big) \in \Re_{++}^{\left\vert \mathcal{W}\right\vert }
\end{equation*}

As a consequence, we employ the following  set of feasible departure flows when the travel demand between each origin-destination pair is unknown. 
\begin{equation}\label{tildelambdadef}
\widetilde\Lambda~=~\left\{(h,\,Q):~h\geq 0,~~ \sum_{p\in\mathcal{P}_{ij}}\int_{t_0}^{t_f} h_p(t)\,dt=Q_{ij}\quad \forall (i,\,j)\in\mathcal{W}\right\}
\subset\left(L^2[t_0,\,t_f]\right)^{|\mathcal{P}|}\times \Re_{+}^{|\mathcal{W}|}
\end{equation}
where $\left(L^2[t_0,\,t_f]\right)^{|\mathcal{P}|}\times \Re_{+}^{|\mathcal{W}|}$ is the direct product  of the $|\mathcal{P}|$-fold product of Hilbert spaces consisting of square-integrable path flows, and the $|\mathcal{W}|$-dimensional Euclidean space consisting of vectors of elastic travel demands. 

With preceding preparation, we are in a place where the simultaneous route-and-departure-choice dynamic user equilibrium with elastic demand can be rigorously defined, as follows.

\begin{definition}\label{dueelasticdef}
{\bf (Dynamic user equilibrium with elastic demand)} A pair $(h^*,\, Q^*)\in \widetilde \Lambda$, where $h^*$ is a vector of departure rates (path flows)  and $Q^*$ is the associated vector of travel demands, is said to be a dynamic user equilibrium with elastic demand if for all $(i,\,j)\in\mathcal{W}$,
\begin{align}
\label{chapVI:eqn1}
h_p^*(t)~>~0,~p\in\mathcal{P}_{ij}~\Longrightarrow~\Psi_p(t,\,h^*)~=~\Theta_{ij}[Q^*]\qquad  &\forall_{\nu}(t)\in[t_0,\, t_f]
\\
\label{chapVI:eqn2}
\Psi_p(t,\,h^*)~\geq~\Theta_{ij}[Q^*] \qquad &\forall_{\nu} (t)\in[t_0,\,t_f],\quad \forall p\in\mathcal{P}_{ij}
\end{align}

\end{definition}

\section{The variational inequality formulation of E-DUE}\label{secvielastic}

Experience with differential games suggests that the DUE problem with elastic demand can be expressed as a variational inequality, as shown in the theorem below. 

\begin{theorem}\label{vielasticthm} {\bf (E-DUE equivalent to a variational inequality)} Assume $\Psi_p(\cdot,\,h): [t_0,\,t_f]\rightarrow \Re_{++}$ is measurable and strictly positive for all $p\in\mathcal{P}$ and all $h$ such that $(h,\,Q)\in\widetilde \Lambda$. Also assume that the elastic travel demand function is invertible with inverse $\Theta_{ij}[\cdot]$ for all $(i,\,j)\in \mathcal{W}$. Then a pair, $(h^*,\,Q^*)\in\widetilde\Lambda$, is a DUE with elastic demand as in Definition \ref{dueelasticdef} if and only it solves the following variational inequality:
\begin{equation}\label{chapVI:elasticvi}
\left.\begin{array}{c}
\hbox{find} ~(h^{\ast},\,Q^*)\in \widetilde\Lambda~\hbox{such that}\\
\displaystyle\sum_{p\in \mathcal{P}}\int\nolimits_{t_{0}}^{t_{f}}\Psi _{p}(t,h^*)(h_{p}-h_{p}^{\ast })dt
-\sum_{(i,\,j) \in \mathcal{W}}\Theta _{ij}\left[ Q^{\ast } \right] \left[Q_{ij} -Q_{ij}^{\ast}
\right] ~\geq~0   \\ \forall (h,\,Q)\in \widetilde\Lambda  
\end{array} 
\right\} VI(\Psi,\Theta,t_{0},t_{f})
\end{equation}
\end{theorem}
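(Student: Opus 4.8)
The plan is to prove the two implications of the equivalence separately, exploiting the fact that in the equilibrium conditions \eqref{chapVI:eqn1}--\eqref{chapVI:eqn2} the inverse-demand value $\Theta_{ij}[Q^*]$ plays exactly the role that the essential-infimum cost $v_{ij}(h^*)$ plays in the fixed-demand Definition \ref{duedef}. Note that throughout the operator $\Psi_p(\cdot,h^*)$ is \emph{frozen} at the equilibrium point, so the left-hand side of \eqref{chapVI:elasticvi} is linear in the test pair $(h,Q)$; this linearity is what makes a direct perturbation argument viable. The forward implication (E-DUE $\Rightarrow$ VI) is then a direct computation, while the reverse implication (VI $\Rightarrow$ E-DUE) is where the measure-theoretic work concentrates. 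I would organize the reverse direction so that the fixed-demand equivalence carries the ``within-OD'' part while a family of demand-shifting perturbations pins down the value $\Theta_{ij}[Q^*]$.

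For the forward direction I would take an arbitrary $(h,Q)\in\widetilde\Lambda$ and split the path sum in \eqref{chapVI:elasticvi} over OD pairs, $\sum_{p\in\mathcal P}=\sum_{(i,j)\in\mathcal W}\sum_{p\in\mathcal P_{ij}}$. On the set where $h_p^*>0$ the complementarity condition \eqref{chapVI:eqn1} lets me replace $\Psi_p(t,h^*)$ by the constant $\Theta_{ij}[Q^*]$ (both integrands vanish where $h_p^*=0$), so that $\sum_{p\in\mathcal P_{ij}}\int_{t_0}^{t_f}\Psi_p(t,h^*)h_p^*(t)\,dt=\Theta_{ij}[Q^*]\,Q_{ij}^*$ after using the demand constraint in \eqref{tildelambdadef}. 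For the feasible comparison flow $h\ge0$, the inequality \eqref{chapVI:eqn2} gives $\sum_{p\in\mathcal P_{ij}}\int_{t_0}^{t_f}\Psi_p(t,h^*)h_p(t)\,dt\ge\Theta_{ij}[Q^*]\,Q_{ij}$. Subtracting, summing over $\mathcal W$, and recognizing the $\Theta$-term reproduces precisely the expression in \eqref{chapVI:elasticvi}, which is therefore nonnegative.

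For the reverse direction I would first restrict the test pairs in \eqref{chapVI:elasticvi} to those with $Q=Q^*$ and $h$ ranging over the fixed-demand set $\Lambda_0$ corresponding to the demands $Q_{ij}^*$ (these all lie in $\widetilde\Lambda$). Then the finite-dimensional term vanishes and the VI collapses to \eqref{duevi}; invoking the equivalence of \eqref{duevi} and Definition \ref{duedef} established by \cite{Friesz1993} yields $\Psi_p(t,h^*)\ge v_{ij}(h^*)$ and, on the support of each used path, $\Psi_p(t,h^*)=v_{ij}(h^*)$, both holding $\forall_{\nu}(t)$. It then remains to show $v_{ij}(h^*)=\Theta_{ij}[Q^*]$ for every $(i,j)\in\mathcal W$. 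For $v_{ij}(h^*)\ge\Theta_{ij}[Q^*]$, given $\epsilon>0$ I would add to a min-cost path $p^\dagger\in\mathcal P_{ij}$ a small nonnegative flow of total mass $\delta$ supported on the set $\{t:\Psi_{p^\dagger}(t,h^*)<v_{ij}(h^*)+\epsilon\}$, which has positive measure by the definition \eqref{chapVI:essinf} of essinf, thereby raising $Q_{ij}$ by $\delta$; substitution into \eqref{chapVI:elasticvi} gives $(v_{ij}(h^*)+\epsilon-\Theta_{ij}[Q^*])\,\delta\ge0$, and $\epsilon\downarrow0$ finishes it. For the reverse inequality, when $Q_{ij}^*>0$ I would instead remove mass $\delta$ from a used path, where $\Psi_p=v_{ij}(h^*)$ holds $\forall_{\nu}(t)$ on its support, lowering $Q_{ij}$ by $\delta$; then \eqref{chapVI:elasticvi} forces $\Theta_{ij}[Q^*]\ge v_{ij}(h^*)$. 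Substituting $v_{ij}(h^*)=\Theta_{ij}[Q^*]$ into the within-OD conditions recovers Definition \ref{dueelasticdef}.

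The main obstacle I anticipate is the reverse direction's perturbation analysis, where the constructed perturbations must remain in $\widetilde\Lambda$: the added or removed flow must preserve $h\ge0$ and the coupled constraint $\sum_{p\in\mathcal P_{ij}}\int_{t_0}^{t_f}h_p\,dt=Q_{ij}$, so that changing path mass by $\delta$ alters exactly the targeted component $Q_{ij}$ and nothing else. Because $\Psi_p(\cdot,h^*)$ is only assumed measurable, the supports of the perturbations must be chosen as positive-measure level sets governed by the essential infimum \eqref{chapVI:essinf} rather than by pointwise minimizers, and the complementarity ``equality on the support'' must be handled in the $\nu$-almost-everywhere sense. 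A separate check is required at degenerate OD pairs with $Q_{ij}^*=0$, where flow cannot be removed and only the increasing (one-sided) perturbation is available, so one must verify it still delivers the correct identification of $\Theta_{ij}[Q^*]$.
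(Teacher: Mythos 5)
Your proof is correct, and two of its three pieces coincide with the paper's: the necessity computation (split the path sum over OD pairs, use \eqref{chapVI:eqn1} to evaluate the $h^*$-terms as $\Theta_{ij}[Q^*]Q^*_{ij}$, use \eqref{chapVI:eqn2} to bound the $h$-terms below by $\Theta_{ij}[Q^*]Q_{ij}$) and the first step of sufficiency (freeze $Q=Q^*$, collapse \eqref{chapVI:elasticvi} to the fixed-demand VI \eqref{duevi}, and invoke the Friesz et al.\ equivalence to get the conditions of Definition \ref{duedef} with value $v_{ij}(h^*)$). Where you genuinely diverge is in how the identification $v_{ij}(h^*)=\Theta_{ij}[Q^*]$ is pinned down. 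The paper uses a single two-sided scaling perturbation: for $(k,l)$ with $Q^*_{kl}\neq 0$ it tests the VI against $\hat h_p=a\,h^*_p$ on $\mathcal{P}_{kl}$ and $\hat Q_{kl}=a\,Q^*_{kl}$ with $a\in\Re_{++}$ arbitrary; the complementarity identity $\sum_{p\in\mathcal{P}_{kl}}\int_{t_0}^{t_f}\Psi_p(t,h^*)h^*_p(t)\,dt=v^*_{kl}Q^*_{kl}$ reduces the VI to $(a-1)\left(v^*_{kl}-\Theta_{kl}[Q^*]\right)Q^*_{kl}\geq 0$, and since $a-1$ takes both signs, equality follows in one stroke. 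Your two one-sided perturbations deliver the inequalities separately: withdrawing mass from the support of a used path is in substance the $a<1$ half of the paper's scaling, while injecting mass $\delta$ on the positive-measure level set $\{t:\Psi_{p^\dagger}(t,h^*)<v_{ij}(h^*)+\epsilon\}$ is an ingredient the paper does not use. What that extra ingredient buys is exactly the degenerate case you flag: when $Q^*_{ij}=0$ the scaling (and your withdrawal) argument is unavailable, and the paper disposes of this case by a footnote asserting that \eqref{chapVI:eqn1} and \eqref{chapVI:eqn2} hold ``automatically'' --- true for \eqref{chapVI:eqn1}, which is vacuous, but not obvious for \eqref{chapVI:eqn2}. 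Your injection perturbation still applies at $Q^*_{ij}=0$ and yields $v_{ij}(h^*)\geq\Theta_{ij}[Q^*]$; combined with $\Psi_p(t,h^*)\geq v_p(h^*)\geq v_{ij}(h^*)$ for $\nu$-almost all $t$, this is precisely \eqref{chapVI:eqn2}, and since \eqref{chapVI:eqn1} is vacuous there, the one-sided inequality is all that Definition \ref{dueelasticdef} demands. So the ``separate check'' you left open closes in one line, and your route in fact repairs the one soft spot in the paper's own proof.
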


\begin{proof}
(i)[\emph{Necessity}] Given a DUE solution with elastic demand $(h^*,\,Q^*)\in\widetilde\Lambda$, we easily deduce from \eqref{chapVI:eqn1} and \eqref{chapVI:eqn2} that for any $(h,\,Q)\in\widetilde\Lambda$, 
\begin{align}
&\sum_{p\in \mathcal{P}}\int\nolimits_{t_{0}}^{t_{f}}\Psi _{p}(t,h^*)(h_{p}-h_{p}^{\ast })dt
-\sum_{(i,\,j) \in \mathcal{W}}\Theta _{ij}\left[ Q^{\ast } \right] \left[Q_{ij} -Q_{ij}^{\ast}
\right]\nonumber\\
~=~& \sum_{(i,\,j)\in\mathcal{W}}\left(\sum_{p\in\mathcal{P}_{ij}}\int_{t_0}^{t_f}\Psi_p(t,\,h^*)h_p(t)\,dt-\Theta_{ij}[Q^*]\cdot Q_{ij}\right)\nonumber\\
&~-~\sum_{(i,\,j)\in\mathcal{W}}\left(\sum_{p\in\mathcal{P}_{ij}}\int_{t_0}^{t_f}\Psi_p(t,\,h^*)h^*_p(t)\,dt-\Theta_{ij}[Q^*]\cdot Q^*_{ij}\right)\nonumber\\
~=~&\sum_{(i,\,j)\in\mathcal{W}}\left(\sum_{p\in\mathcal{P}_{ij}}\int_{t_0}^{t_f}\Psi_p(t,\,h^*)h_p(t)\,dt-\Theta_{ij}[Q^*]\cdot Q_{ij}\right)\nonumber\\
&~-~\sum_{(i,\,j)\in\mathcal{W}}\left(\sum_{p\in\mathcal{P}_{ij}}\Theta_{ij}[Q^*]\cdot\int_{t_0}^{t_f}h^*_p(t)\,dt- \Theta_{ij}[Q^*]\cdot Q^*_{ij}\right)\nonumber\\
~=~& \sum_{(i,\,j)\in\mathcal{W}}\left(\sum_{p\in\mathcal{P}_{ij}}\int_{t_0}^{t_f}\Psi_p(t,\,h^*)h_p(t)\,dt-\Theta_{ij}[Q^*]\cdot Q_{ij}\right)\nonumber\\
&~-~\sum_{(i,\,j)\in\mathcal{W}}\Theta_{ij}[Q^*]\cdot\left(\sum_{p\in\mathcal{P}_{ij}}\int_{t_0}^{t_f}h^*_p(t)\,dt- Q^*_{ij}\right)\nonumber\\
~=~& \sum_{(i,\,j)\in\mathcal{W}}\left(\sum_{p\in\mathcal{P}_{ij}}\int_{t_0}^{t_f}\Psi_p(t,\,h^*)h_p(t)\,dt-\Theta_{ij}[Q^*]\cdot Q_{ij}\right)\label{chapVI:eqn3}
\end{align}
Observe that in \eqref{chapVI:eqn3}, given any $(i,\,j)\in\mathcal{W}$,
\begin{align}
&\sum_{p\in\mathcal{P}_{ij}}\int_{t_0}^{t_f}\Psi_p(t,\,h^*)h_p(t)\,dt-\Theta_{ij}[Q^*]\cdot Q_{ij}\nonumber\\
~\geq~& v_{ij}^*\sum_{p\in\mathcal{P}_{ij}}\int_{t_0}^{t_f}h_p(t)\,dt-\Theta_{ij}[Q^*]\cdot Q_{ij}~=~v_{ij}^*\left(\sum_{p\in\mathcal{P}_{ij}}\int_{t_0}^{t_f}h_p(t)\,dt-Q_{ij}  \right)~=~0 \label{chapVI:eqn4}
\end{align}
where $v_{ij}^*$ is the essential infimum of $\Psi_p(\cdot,\, h^*)$ on $[t_0,\,t_f]$ for all $p\in\mathcal{P}_{ij}$, and is equal to $\Theta_{ij}[Q^*]$ according to \eqref{chapVI:eqn1}. 

As an immediate consequence of  \eqref{chapVI:eqn3} and \eqref{chapVI:eqn4}, the following inequality holds for all $(h,\,Q)\in\widetilde\Lambda$.
\begin{equation}\label{chapVI:eqn5}
\sum_{p\in \mathcal{P}}\int\nolimits_{t_{0}}^{t_{f}}\Psi _{p}(t,h^*)(h_{p}-h_{p}^{\ast })dt
-\sum_{(i,\,j) \in \mathcal{W}}\Theta _{ij}\left[ Q^{\ast } \right] \left[Q_{ij} -Q_{ij}^{\ast}
\right]~\geq~0
\end{equation}

(ii)[\emph{Sufficiency}] Assume that \eqref{chapVI:eqn5} holds for any $(h,\,Q)\in\widetilde\Lambda$, then $h^*$ is a solution to the DUE problem with fixed demand given by $Q^*$, since in this case the second term in the left hand side of \eqref{chapVI:eqn5} vanishes and we recover the well-known VI for the fixed demand case, see \eqref{duevi}. Therefore by Definition
\ref{duedef}, for any $(i,\,j)\in\mathcal{W}$,
$$
\begin{cases}
h_{p}^{\ast }\left( t\right) >0,~p\in \mathcal{P}_{ij}~~\Longrightarrow~~ \Psi _{p}\left[
t,h^{\ast }\left( t\right) \right] ~=~v^*_{ij}\\
\Psi_p(t,\,h^*)~>~v^*_{ij}~~\Longrightarrow~~h_p^*(t)~=~0
\end{cases}\qquad\forall_{\nu}(t)\in[t_0,\,t_f]
$$
In order to show that $(h^*,\,Q^*)$ is a DUE with elastic demand using definition \eqref{chapVI:eqn1} and \eqref{chapVI:eqn2}, it suffices to establish that $v_{ij}^*=\Theta_{ij}[Q^*],~\forall (i,\,j)\in\mathcal{W}$. We proceed as follows. 

Fix arbitrary $(k,\,l)\in\mathcal{W}$ such that $Q^*_{kl}\neq 0$ \footnote{For any origin-destination pair $(i,\,j)$ such that $Q^*_{ij}=0$, the logical statements \eqref{chapVI:eqn1} and \eqref{chapVI:eqn2} automatically hold, since in this case $h_p^*(\cdot)$ vanishes almost everywhere for all $p\in\mathcal{P}_{ij}$.}. We define the pair $(\hat h,\,\hat Q)\in\widetilde\Lambda$ as 
\begin{align*}
\hat h_p(t)&~=~\begin{cases}a\,h_p^*(t)\qquad &p\in\mathcal{P}_{kl}\\ 
h_p^*(t) \qquad & p\in\mathcal{P}\setminus \mathcal{P}_{kl}\end{cases}\qquad\forall t\in[t_0,\,t_f] 
\\
\hat Q_{ij}&~=~\begin{cases} a\,Q^*_{kl}\qquad &(i,\,j)=(k,\,l)\\
Q^*_{ij}\qquad & (i,\,j)\in \mathcal{W}\setminus (k,\,l)\end{cases}
\end{align*}
where $a\in\Re_{++}$ is an arbitrary positive parameter.  Substituting $(h,\, Q)$ for $(\hat h,\,\hat Q)$, the left hand side of  \eqref{chapVI:eqn5} becomes
\begin{align*}
&\sum_{p\in \mathcal{P}}\int\nolimits_{t_{0}}^{t_{f}}\Psi _{p}(t,h^*)(\hat h_{p}-h_{p}^{\ast })dt
-\sum_{(i,\,j) \in \mathcal{W}}\Theta _{ij}\left[ Q^{\ast } \right] \left[\hat Q_{ij} -Q_{ij}^{\ast}
\right]
\\
~=~&\sum_{p\in\mathcal{P}_{kl}}\int_{t_0}^{t_f}\Psi_p(t,\,h^*)(ah_p^*-h_p^*)\,dt+\sum_{p\in\mathcal{P}\setminus\mathcal{P}_{kl}}\int_{t_0}^{t_f}\Psi_p(t,\,h^*)(h_p^*-h_p^*)\,dt
\\
&~-~\Theta_{kl}[Q^*]\left[aQ^*_{kl}-Q^*_{kl}\right]-\sum_{(i,\,j)\in\mathcal{W}\setminus (k,\,l)}\Theta_{ij}[Q^*]\left[Q_{ij}^*-Q^*_{ij}\right]
\\
~=~&(a-1)\sum_{p\in\mathcal{P}_{kl}}\int_{t_0}^{t_f}\Psi_p(t,\,h^*)\,h_p^*(t)\,dt-(a-1)\Theta_{kl}[Q^*]\,Q^*_{kl}
\\
~=~&(a-1) v_{kl}^* Q^*_{kl}-(a-1)\Theta_{kl}[Q^*]Q^*_{kl}
\end{align*}
We conclude from \eqref{chapVI:eqn5} that 
$$
(a-1)\left(v^*_{kl}-\Theta_{kl}[Q^*]\right)Q^*_{kl}~\geq~0
$$
Since $a\in\Re_{++}$ is arbitrary, there must hold $v_{kl}=\Theta_{kl}[Q^*]$, for any $(k,\,l)\in \mathcal{W}$. The proof is complete.\end{proof}

\section{The differential variational inequality formulation of E-DUE}\label{secdvi}
The notion of  dynamic user equilibrium can be alternatively illustrated and analyzed using the mathematical paradigm of differential variational inequality (DVI) \citep{Friesz2001, FKKR}. In particular, the demand satisfaction can be easily rewritten as a two-point boundary value problem
\begin{equation}\label{tbivp}
{d y_{ij}\over dt}~=~\sum_{p\in\mathcal{P}_{ij}}h_p(t),\qquad y_{ij}(t_0)~=~0,\qquad y_{ij}(t_f)~=~Q_{ij}\qquad\forall (i,\,j)\in\mathcal{W}
\end{equation}
where $y_{ij}(\cdot)$ is interpreted as the cumulative departure curve. In the context of the elastic demand case,  we assume there are \underline{unknown} terminal states  $
y_{ij}(t_{f})$, for all $\left( i,j\right) \in \mathcal{W}$, which are the
realized DUE travel demands. Moreover, for each origin-destination pair $%
\left( i,j\right) \in \mathcal{W}$, inverse travel demand is expressed as%
\begin{equation}
v_{ij}=\Theta _{ij}\left[ y\left( t_{f}\right) \right]  \label{tayta}
\end{equation}
where
$$
y~=~\big(y_{ij}(\cdot):~(i,\,j)\in\mathcal{W}\big)\in\Big(\mathcal{H}^1([t_0,\,t_f])\Big)^{|\mathcal{W}|}
$$
$\mathcal{H}^1([t_0,\,t_f])$ is the Sobolev space consisting of weakly differentiable functions whose weak derivatives are square-integrable. Thus, the $y_{ij}(t_{f})$, for all $\left( i,j\right) \in \mathcal{W}$, will
be determined endogenously to the differential variational inequality
presented subsequently in Theorem \ref{dvielasticthm}. Such an approach contrasts to
the approach employed by \cite{FKKR} to study fixed-demand DUE by
making each $y_{ij}(t_{f})$ an {\it a priori} fixed constant $Q_{ij}$. Accordingly, we
introduce the following dynamics:%
\begin{equation}
\frac{dy_{ij}}{dt}=\sum_{p\in \mathcal{P}_{ij}}h_{p}, \qquad
y_{ij}(t_{0})=0  \qquad  \forall \left( i,j\right) \in \mathcal{W}
\label{invap}
\end{equation}%
As a consequence, we employ the following alternative form of the feasible
set:%
\begin{equation}
\Lambda_1 =\left\{ h\geq 0:\frac{dy_{ij}}{dt}=\sum_{p\in P_{ij}}h_{p}\left(
t\right) \text{ \ \ \ }y_{ij}(t_{0})=0\text{ \ \ }\forall \left( i,j\right)
\in \mathcal{W}\right\} \subseteq \left( L_{+}^{2}\left[ t_{0},t_{f}\right]
\right) ^{\left\vert \mathcal{P}\right\vert }  \label{fregb}
\end{equation}%
Note that the feasible set $\Lambda_1$ in (\ref{fregb}) is expressed as a set
of path flows since knowledge of $h$ completely determines the demands that
satisfy the initial value problem (\ref{tbivp}). The differential variational inequality formulation of E-DUE is first stated and proved by \cite{FM2013} using optimal control theory.

\begin{theorem}\label{dvielasticthm}
{\bf (E-DUE equivalent to a differential
variational inequality)} Assume $\Psi _{p}(\cdot ,h):\left[ t_{0},t_{f}\right]
\longrightarrow \Re _{++}^{1}$ is measurable and strictly positive for all $%
p\in \mathcal{P}$ and all $h\in \Lambda_1 $. Also assume that the elastic
travel demand function is invertible, with inverse $\Theta _{ij}\left(
Q\right) $ for all $\left( i,j\right) \in \mathcal{W}$. A vector of
departure rates (path flows) $h^{\ast }\in \Lambda_1$ is a dynamic user
equilibrium with associated demand $Q^{\ast }(t_{f})$ if and only if $%
h^{\ast }$ solves the following $DVI(\Psi ,\Theta ,t_{0},t_{f})$.
\begin{equation}
\left.
\begin{array}{c}
\hbox{find}~h^*\in\Lambda_1~\hbox{such that}\\ 
\displaystyle \sum_{p\in \mathcal{P}}\int\nolimits_{t_{0}}^{t_{f}}\Psi _{p}(t,h^{\ast
})(h_{p}-h_{p}^{\ast })dt-\sum\limits_{\left( i,j\right) \in \mathcal{W}%
}\Theta _{ij}\left[ y^{\ast }\left( t_{f}\right) \right] \left[ \rule%
{0pt}{13pt}y_{ij}\left( t_{f}\right) -y_{ij}^{\ast }\left( t_{f}\right) %
\right] \geq 0\\
\forall h\in \Lambda_1
\end{array}
\right\} DVI\big(\Psi,\,\Theta,\,t_0,\,t_f\big)
 \label{duedvi}
\end{equation}
\end{theorem}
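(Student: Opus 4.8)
The plan is to obtain this DVI formulation as a direct consequence of Theorem \ref{vielasticthm}, by showing that the feasible set $\Lambda_1$ in \eqref{fregb} is merely a reparametrization of $\widetilde\Lambda$ in \eqref{tildelambdadef} under which the two variational inequalities \eqref{chapVI:elasticvi} and \eqref{duedvi} coincide term by term. Since Theorem \ref{vielasticthm} already identifies solutions of $VI(\Psi,\Theta,t_0,t_f)$ with E-DUE in the sense of Definition \ref{dueelasticdef}, the only substantive work here is to verify that integrating the flow dynamics \eqref{invap} recovers precisely the demand-satisfaction constraint that defines $\widetilde\Lambda$. This is exactly the point at which the measure-theoretic route of Theorem \ref{vielasticthm} pays off: it lets us bypass the optimal-control argument of \cite{FM2013} and reduce the DVI statement to pure bookkeeping.

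First I would check that the initial value problem \eqref{invap} is well posed for every $h\in\Lambda_1$. Because each $h_p\in L^2[t_0,t_f]$ on the bounded interval $[t_0,t_f]$, we have $h_p\in L^1[t_0,t_f]$, so $\sum_{p\in\mathcal{P}_{ij}}h_p\in L^2[t_0,t_f]\subset L^1[t_0,t_f]$ and the function
\begin{equation*}
y_{ij}(t)~=~\int_{t_0}^{t}\sum_{p\in\mathcal{P}_{ij}}h_p(s)\,ds
\end{equation*}
is absolutely continuous with weak derivative $\sum_{p\in\mathcal{P}_{ij}}h_p\in L^2[t_0,t_f]$; hence $y_{ij}\in\mathcal{H}^1([t_0,t_f])$, equation \eqref{invap} holds $\nu$-almost everywhere, and the fundamental theorem of calculus gives
\begin{equation*}
y_{ij}(t_f)~=~\sum_{p\in\mathcal{P}_{ij}}\int_{t_0}^{t_f}h_p(s)\,ds \qquad\forall (i,\,j)\in\mathcal{W}.
\end{equation*}

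With this in hand I would set up the correspondence $\iota:\Lambda_1\to\widetilde\Lambda$, $h\mapsto\big(h,\,y(t_f)\big)$, where $y(t_f)=\big(y_{ij}(t_f):(i,\,j)\in\mathcal{W}\big)$. The display above shows that $\iota(h)$ satisfies exactly the demand constraint in \eqref{tildelambdadef}, so $\iota$ is well defined; conversely, for any $(h,\,Q)\in\widetilde\Lambda$ we have $h\in\Lambda_1$ and $y_{ij}(t_f)=Q_{ij}$, so $\iota$ is a bijection with inverse $(h,\,Q)\mapsto h$. Writing $Q^*=y^*(t_f)$, the finite-dimensional term of the DVI \eqref{duedvi} becomes literally the finite-dimensional term of the VI \eqref{chapVI:elasticvi}, since $\Theta_{ij}[y^*(t_f)]=\Theta_{ij}[Q^*]$ and $y_{ij}(t_f)=Q_{ij}$, while the infinite-dimensional integral term is unchanged. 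Hence $h^*$ solves \eqref{duedvi} over $\Lambda_1$ if and only if $\big(h^*,\,y^*(t_f)\big)$ solves \eqref{chapVI:elasticvi} over $\widetilde\Lambda$, which by Theorem \ref{vielasticthm} is equivalent to $\big(h^*,\,Q^*\big)$ being an E-DUE as in Definition \ref{dueelasticdef}.

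I do not expect any genuine obstacle in this argument: once the identification $\iota$ is made the equivalence is immediate from Theorem \ref{vielasticthm}. The only point demanding care is the regularity bookkeeping of the dynamics, namely confirming that $y_{ij}$ lands in $\mathcal{H}^1([t_0,t_f])$ and that the terminal value is well defined for $h$ ranging over the whole $L^2$ cone; this is a minor matter handled by the $L^2\subset L^1$ embedding on a bounded interval, and does not require any a priori boundedness of the path flows.
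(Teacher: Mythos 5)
Your proposal is correct, but it is not the paper's route: the paper offers no self-contained proof of Theorem \ref{dvielasticthm} at all, deferring instead to \cite{FM2013}, whose argument is built on optimal control theory (necessary conditions for a terminal-state-constrained control problem). You instead derive the DVI as a corollary of Theorem \ref{vielasticthm} via the identification $\iota:\Lambda_1\to\widetilde\Lambda$, $h\mapsto\big(h,\,y(t_f)\big)$, after checking via the $L^2\subset L^1$ embedding that $y_{ij}\in\mathcal{H}^1([t_0,t_f])$ and $y_{ij}(t_f)=\sum_{p\in\mathcal{P}_{ij}}\int_{t_0}^{t_f}h_p(s)\,ds$, so that the two feasible sets correspond bijectively (surjectivity because any $(h,Q)\in\widetilde\Lambda$ has $h\in\Lambda_1$ with terminal state forced to equal $Q$; injectivity because the first component determines the second) and the two variational inequalities coincide term by term. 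There is no circularity, since Theorem \ref{vielasticthm} is proved in the paper by a measure-theoretic argument independent of any DVI machinery, so your reduction is logically sound. What each approach buys: yours makes the paper self-contained and exposes the DVI as a mere reparametrization of the VI --- the only analytic content being the regularity bookkeeping for the state variable --- which is consonant with the paper's stated goal of bypassing optimal control; the optimal-control proof of \cite{FM2013}, by contrast, establishes the DVI directly from the equilibrium definition and yields adjoint/Hamiltonian structure that is useful for the fixed-point computational schemes alluded to in the conclusion, structure that the bijection argument does not produce. One small point of care your write-up already handles correctly: the nonnegativity $Q_{ij}=y_{ij}(t_f)\geq 0$ needed for $\iota(h)\in\widetilde\Lambda$ follows from $h\geq 0$, so the image really does land in $\widetilde\Lambda$ as defined in \eqref{tildelambdadef}.
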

\begin{proof}
See \cite{FM2013}.
\end{proof}

\section{Existence of dynamic user equilibrium with elastic demand}\label{secexistenceedue}  

In this section, we will establish existence result for $VI(\Psi,\Theta,t_0,t_f)$, an equivalent formulation of E-DUE. Our proposed approach is meant to incorporate the most general dynamic network loading sub-model with minimum regularity requirements, and to yield existence of E-DUE without invoking the {\it a priori} upper bound on path flows. In subsequent analysis, we will rewrite $VI(\Psi,\Theta,t_0,t_f)$ as a variational inequality problem in an extended Hilbert space and then employ a version of the Brouwer's Fixed Point Theorem \citep{Browder}. To this end, we introduce the product space $ E\doteq \big(L^2[t_0,\,t_f]\big)^{|\mathcal{P}|}\times \Re^{|\mathcal{W}|}$, which is a Hilbert space with the induced inner product defined as follows 
\begin{align}
\label{inducedipdef}
\left<X,\,Y\right>_E~\doteq~\sum_{i=1}^{|\mathcal{P}|}\int_{t_0}^{t_f}\xi_i(t)\cdot \eta_i(t)\,dt+\sum_{j=1}^{|\mathcal{W}|}u_jv_j\\
X~=~\left(\xi_1(\cdot),\,\ldots,\,\xi_{|\mathcal{P}|}(\cdot),\, u_1,\,\ldots,\,u_{|\mathcal{W}|}\right)\in E\\
Y~=~\left(\eta_1(\cdot),\,\ldots,\,\eta_{|\mathcal{P}|}(\cdot),\, v_1,\,\ldots,\,v_{|\mathcal{W}|}\right)\in E
\end{align}
Let us recall the VI formulation presented in Section \ref{secvielastic}, the set $\widetilde\Lambda$ of  admissible pair $(h,\,Q)$ can now be embedded in the extended space $E$:
$$
\widetilde\Lambda~=~\left\{(h,\,Q)\in(L^2_+[t_0,\,t_f])^{|\mathcal{P}|}\times \Re_+^{|\mathcal{W}|}:~ \sum_{p\in\mathcal{P}_{ij}}\int_{t_0}^{t_f}h_p(t)\,dt~=~Q_{ij}\quad \forall (i,\,j)\in\mathcal{W} \right\}~\subset~E
$$
In view of the inverse demand function $\Theta=(\Theta_{ij}:\,(i,\,j)\in\mathcal{W})$, we introduce  notation
$$
\Theta^-~\doteq~(-\Theta_{ij}:\,(i,\,j)\in\mathcal{W}):\quad\Re_+^{|\mathcal{W}|}\longrightarrow \Re_-^{|\mathcal{W}|}
$$
We next define the mapping 
\begin{equation}\label{calfdef}
\mathcal{F}: \widetilde \Lambda \longrightarrow E,\qquad (h,\,Q)~\mapsto~\left(\Psi(\cdot,\,h),\, \Theta^-(Q)\right)
\end{equation}
where $(h,\,Q)\in\widetilde \Lambda$, $\Psi(\cdot,\,h)\in\big(L_+^2[t_0,\,t_f]\big)^{|\mathcal{W}|},\, \Theta^-(Q)\in\Re_-^{|\mathcal{W}|}$. Such a mapping is clearly well-defined.  With the preceding discussion, the VI formulation of the DUE problem with elastic demand is readily rewritten as the following infinite-dimensional variational inequality in the extended Hilbert space.

\begin{equation}\label{viextend}
\left. 
\begin{array}{c}
\text{find }X^{\ast }\in \widetilde\Lambda \text{ such that} \\ \\
\left<\mathcal{F}(X^*),\, X-X^*\right>_E~\geq~0 \\ \\
\forall X\in \widetilde\Lambda %
\end{array}%
\right\} VI\Big(\mathcal{F}, \Theta, \widetilde\Lambda, t_0, t_f\Big) 
\end{equation}%
where $X=(h,\,Q)$ and $X^*=(h^*,\,Q^*)$. Problem (\ref{viextend}) is written in the generic form of variational inequality, which allows analysis regarding solution existence to be carried out in a framework provided by \cite{Browder}.

\subsection{Existence result for $VI\big(F,\,\Theta,\,\widetilde\Lambda,\,t_0,\,t_f\big)$}\label{secexistence}

Our qualitative analysis regarding solution existence for the variational inequality (\ref{viextend}) is based on the following extension of Brouwer's fixed point theorem to topological vector spaces. 

\begin{theorem}\label{browderthm} 
\citep{Browder} Let $K$ be a compact convex subset of the locally convex
topological vector space $V$, $T$ a continuous (single-valued) mapping of $K$
into $V^{\ast }$, where $V^*$ is the dual space of $V$. Then there exits $u_{0}$ in $K$ such that 
\begin{equation*}
\Big<T(u_{0}),\,u_{0}-u\Big>~\geq ~0
\end{equation*}%
for all $u\in K$.
\end{theorem}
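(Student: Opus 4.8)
The plan is to derive the statement from Brouwer's fixed point theorem in finite dimensions, together with a compactness reduction and a partition-of-unity argument; this is the route that underlies the Ky Fan--KKM treatment of variational inequalities. I would argue by contradiction. Suppose no such $u_0$ exists, so that for every $w\in K$ there is some $u\in K$ with $\langle T(w),\,w-u\rangle<0$. For each $u\in K$ set
$$
O_u~=~\{\,w\in K:~\langle T(w),\,w-u\rangle<0\,\}.
$$
By the contradiction hypothesis the family $\{O_u:u\in K\}$ covers $K$, and — this is precisely where the continuity hypothesis on $T$ enters — each $O_u$ is open in $K$, because the functional $w\mapsto\langle T(w),\,w-u\rangle$ is continuous. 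Since $K$ is compact, finitely many of these sets, say $O_{u_1},\ldots,O_{u_n}$, already cover $K$.

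Next I would pass to finite dimensions. Let $\{\beta_1,\ldots,\beta_n\}$ be a continuous partition of unity on $K$ subordinate to the cover $\{O_{u_1},\ldots,O_{u_n}\}$, which exists since the compact set $K$, being a subset of a Hausdorff locally convex space, is normal. Define the continuous map
$$
P:K\longrightarrow K,\qquad P(w)~=~\sum_{i=1}^n\beta_i(w)\,u_i.
$$
The range of $P$ lies in the compact convex set $\Delta=\mathrm{conv}\{u_1,\ldots,u_n\}\subseteq K$, which is contained in a finite-dimensional affine subspace of $V$. Restricting $P$ to $\Delta$ yields a continuous self-map of a compact convex subset of a finite-dimensional space, so Brouwer's theorem furnishes a fixed point $w^\ast=P(w^\ast)\in\Delta$.

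Finally I would extract the contradiction. Using $w^\ast=\sum_i\beta_i(w^\ast)u_i$ and the bilinearity of the pairing,
$$
0~=~\langle T(w^\ast),\,w^\ast-P(w^\ast)\rangle~=~\sum_{i=1}^n\beta_i(w^\ast)\,\langle T(w^\ast),\,w^\ast-u_i\rangle.
$$
For every index $i$ with $\beta_i(w^\ast)>0$ the support condition gives $w^\ast\in O_{u_i}$, hence $\langle T(w^\ast),\,w^\ast-u_i\rangle<0$; as the weights are nonnegative and sum to one, the right-hand side is strictly negative, contradicting the value $0$ on the left. Therefore the supposition fails and the desired $u_0$ exists.

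I expect the main obstacle to be the continuity step, i.e.\ verifying that each $O_u$ is genuinely open. This hinges on the joint continuity of the dual pairing along the diagonal $w\mapsto\langle T(w),\,w\rangle$, which in turn forces a careful matching between the topology carried by $V^\ast$ and the topology on $K$, so that ``$T$ continuous into $V^\ast$'' really does render $w\mapsto\langle T(w),\,w-u\rangle$ continuous on $K$. Once that point is secured, the covering argument, the existence of the subordinate partition of unity, and the finite-dimensional Brouwer step are all routine. An essentially equivalent formulation would run through the KKM lemma: show that $S(u)=\{w\in K:\langle T(w),\,w-u\rangle\geq0\}$ is closed and that $\{S(u):u\in K\}$ is a KKM family (the affineness of $u\mapsto\langle T(w),\,w-u\rangle$ yields the KKM property exactly as in the displayed computation above), and then conclude $\bigcap_{u\in K}S(u)\neq\emptyset$ from the finite intersection property on the compact set $K$.
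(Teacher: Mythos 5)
The paper does not actually prove this statement: it is quoted from Browder (1968) and its ``proof'' is the citation itself, so there is no internal argument to compare against. Your proof is correct, and it is in essence the argument of the cited source: Browder derives this variational inequality result from his fixed point theorem for set-valued maps with nonempty convex values and open preimages, and the proof of that fixed point theorem is exactly your partition-of-unity-plus-Brouwer reduction. You have simply inlined the two steps --- your sets $O_u$ are the open preimages $S^{-1}(u)$ of the map $S(w)=\{u\in K:\langle T(w),\,w-u\rangle<0\}$, and a fixed point $w\in S(w)$ yields the absurdity $0<0$. The individual steps all check out: the finite subcover, normality of the compact Hausdorff set $K$ furnishing the subordinate partition of unity, compactness and convexity of $\Delta=\mathrm{conv}\{u_1,\dots,u_n\}$ inside a finite-dimensional affine subspace, and the strict negativity of $\sum_{i}\beta_i(w^*)\langle T(w^*),\,w^*-u_i\rangle$. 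The caveat you flag is genuine, but it is a defect of the statement as transcribed rather than of your argument: the theorem omits which topology $V^*$ carries, and openness of $O_u$ requires $w\mapsto\langle T(w),\,w\rangle$ to be continuous. If $V^*$ carries the strong topology (uniform convergence on bounded sets), this follows from
\begin{equation*}
\big|\langle T(w),\,w\rangle-\langle T(w_0),\,w_0\rangle\big|~\leq~\sup_{x\in K}\big|\langle T(w)-T(w_0),\,x\rangle\big|+\big|\langle T(w_0),\,w-w_0\rangle\big|,
\end{equation*}
using that the compact set $K$ is bounded; and in the only place the paper invokes the theorem, $V$ is the Hilbert space $E$, $K=\widetilde\Lambda_n$ is norm-compact, and $T=\mathcal{F}$ is norm-continuous, so joint continuity of the pairing is immediate from Cauchy--Schwarz and the issue disappears.
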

\begin{proof}
See \cite{Browder}.
\end{proof}

In preparation for our existence proof, we recap several key results from functional analysis that facilitate our presentation. In particular, we note the following facts. The reader is referred to \cite{Royden} for more detailed discussion on these subjects. 

\begin{proposition}
The space of square-integrable real-valued functions on a compact interval $[t_0,\,t_f]$, denoted by $L^2[t_0,\,t_f]$, is a locally convex topological vector space. In addition, the $|\mathcal{P}|$-fold product of these spaces, denoted by $\big(L^2[t_0,\,t_f]\big)^{|\mathcal{P}|}$, is also a locally convex topological vector space. 
\end{proposition}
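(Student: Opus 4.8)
The plan is to reduce both assertions to the single classical fact that every normed vector space is a locally convex topological vector space, together with the observation that local convexity is stable under finite products. For the first claim I would recall that $L^2[t_0,\,t_f]$ --- understood as the space of equivalence classes of square-integrable functions modulo equality $\nu$-almost everywhere --- is a Hilbert space, hence in particular a normed space, under the norm $\|f\|_2 = \left(\int_{t_0}^{t_f} |f(t)|^2\, dt\right)^{1/2}$. Thus the entire task is to verify, for this normed space, the two ingredients in the definition of a locally convex topological vector space: that the norm topology makes the space a topological vector space, and that the origin admits a neighborhood base of convex sets.

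First I would check that $L^2[t_0,\,t_f]$ is a real vector space under the (a.e.) pointwise operations, which follows because a linear combination of square-integrable functions is again square-integrable, an immediate consequence of the inequality $|f+g|^2 \le 2|f|^2 + 2|g|^2$. Next I would verify the norm axioms: positive definiteness is where one must identify functions agreeing $\nu$-almost everywhere --- precisely the reason elements of $L^2$ are equivalence classes, a point already flagged in the text where departure-time densities are noted to be ``defined only up to a set of measure zero'' --- while the triangle inequality is Minkowski's inequality, which I would cite from \cite{Royden}. Continuity of vector addition and scalar multiplication in the norm topology is then routine, establishing that $L^2[t_0,\,t_f]$ is a topological vector space. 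Local convexity I would obtain by exhibiting the open balls $B(0,r) = \{f :\|f\|_2 < r\}$ as a neighborhood base at the origin; their convexity is immediate from the triangle inequality and absolute homogeneity of the norm.

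For the product $\big(L^2[t_0,\,t_f]\big)^{|\mathcal{P}|}$, I would endow it with the natural direct-sum norm $\left(\sum_{p\in\mathcal{P}} \|f_p\|_2^2\right)^{1/2}$, which is exactly the norm induced by the inner product \eqref{inducedipdef} restricted to its function components, and note that the product topology on a finite product of normed spaces coincides with this norm topology. Local convexity then transfers because a finite product of convex sets is convex, so finite products of balls furnish a convex neighborhood base at the origin; equivalently, one invokes the general theorem that a finite product of locally convex spaces is locally convex.

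I do not anticipate a genuine obstacle here, as the content is entirely classical and every analytic step (Minkowski's inequality, the product-topology identification) is available in \cite{Royden}; note in particular that completeness of $L^2$, while true by the Riesz--Fischer theorem, is not needed for this claim, since local convexity requires only the norm and topological-vector-space structure. The only point demanding care rather than difficulty is the bookkeeping of $\nu$-almost-everywhere equivalence classes required for positive definiteness of the norm, which is conceptual housekeeping already anticipated in the paper's setup rather than a hard step.
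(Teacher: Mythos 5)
Your proposal is correct and complete: the reduction to the classical facts that a normed (indeed Hilbert) space is a locally convex topological vector space, and that finite products preserve local convexity, is exactly the standard argument. The paper itself offers no proof of this proposition --- it simply cites \cite{Royden} for these facts --- so your write-up, including the careful handling of $\nu$-a.e.\ equivalence classes for positive definiteness and the observation that completeness is not needed, is a faithful elaboration of precisely the argument the paper's citation points to.
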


\begin{proposition}
The dual space of $\big(L^{2}[t_0,\,t_f]\big)^{|\mathcal{P}|}$  has a natural isomorphism with $\big(L^{2}[t_0,\,t_f]\big)^{|\mathcal{P}|}$. The dual space of the Euclidean space $\Re^{|\mathcal{W}|}$ consisting of columns of $|\mathcal{W}|$ real numbers is interpreted as the space consisting of rows of $|\mathcal{W}|$ real numbers. As a consequence, the dual space of $\big(L^2[t_0,\,t_f]\big)^{|\mathcal{P}|}\times\Re^{|\mathcal{W}|}$ is again $\big(L^2[t_0,\,t_f]\big)^{|\mathcal{P}|}\times\Re^{|\mathcal{W}|}$.
\end{proposition}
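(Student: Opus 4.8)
The plan is to obtain the statement by combining three standard facts: the Riesz representation theorem for the Hilbert space $L^2[t_0,\,t_f]$, the self-duality of finite-dimensional Euclidean space, and the elementary principle that the topological dual of a finite product of Hilbert spaces is the product of the duals, paired coordinatewise. All three may be found in \cite{Royden}.

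First I would establish the claim for a single factor. Since $L^2[t_0,\,t_f]$ is a Hilbert space, the Riesz representation theorem asserts that the map sending $f\in L^2[t_0,\,t_f]$ to the functional $g\mapsto\int_{t_0}^{t_f}f(t)g(t)\,dt$ is a linear isometric bijection of $L^2[t_0,\,t_f]$ onto its dual; this is precisely the natural isomorphism in the statement. To pass to the $|\mathcal{P}|$-fold product, I would note that any bounded linear functional $\Phi$ on $\big(L^2[t_0,\,t_f]\big)^{|\mathcal{P}|}$ restricts, along each coordinate embedding, to a bounded functional on the $i$-th copy of $L^2[t_0,\,t_f]$, which by Riesz is represented by a unique $f_i\in L^2[t_0,\,t_f]$. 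By linearity $\Phi(g_1,\ldots,g_{|\mathcal{P}|})=\sum_{i=1}^{|\mathcal{P}|}\int_{t_0}^{t_f}f_i(t)g_i(t)\,dt$, and conversely every such family $(f_i)$ defines a bounded functional; hence the dual of $\big(L^2[t_0,\,t_f]\big)^{|\mathcal{P}|}$ is naturally identified with $\big(L^2[t_0,\,t_f]\big)^{|\mathcal{P}|}$ itself.

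For the finite-dimensional factor I would recall that $\Re^{|\mathcal{W}|}$ is self-dual: every linear functional has the form $Q\mapsto w^{\top}Q=\sum_{(i,\,j)\in\mathcal{W}}w_{ij}Q_{ij}$ for a unique $w\in\Re^{|\mathcal{W}|}$. The column/row language of the statement is merely the bookkeeping that records this pairing, identifying the dual vector $w$ with the row that acts by left multiplication on the column $Q$. Combining the two factors, I would split an arbitrary functional on $E=\big(L^2[t_0,\,t_f]\big)^{|\mathcal{P}|}\times\Re^{|\mathcal{W}|}$ into its actions on the two direct summands and apply the preceding two identifications, concluding that every bounded linear functional on $E$ is represented by a unique element of $E$ through the product inner product $\langle\cdot,\cdot\rangle_E$ of \eqref{inducedipdef}; thus $E$ is self-dual.

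There is no genuine obstacle here, since the result merely repackages the Riesz representation theorem with the duality of finite products --- indeed, the quickest route is to observe that $E$ is itself a Hilbert space under $\langle\cdot,\cdot\rangle_E$ and to invoke Riesz a single time. The only point meriting care is verifying that the coordinatewise representations assemble into one functional expressed through $\langle\cdot,\cdot\rangle_E$, and that all identifications are canonical through the inner product rather than dependent on a choice of basis; once this is checked the proposition follows.
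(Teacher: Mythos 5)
Your proposal is correct. Note that the paper itself offers no proof of this proposition---it is stated as a recapped standard fact with a pointer to Royden's \emph{Real Analysis}---and your argument (coordinatewise Riesz representation, or, more economically, the single observation that $E=\big(L^2[t_0,t_f]\big)^{|\mathcal{P}|}\times\Re^{|\mathcal{W}|}$ is itself a Hilbert space under $\langle\cdot,\cdot\rangle_E$ so that one application of the Riesz representation theorem suffices) is exactly the standard reasoning the paper implicitly relies on.
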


\begin{proposition}\label{propsition3}
In a metric space (therefore topological vector space), the notion of compactness is equivalent to the notion of sequential compactness, that is, every infinite sequence has a convergent subsequence. 
\end{proposition}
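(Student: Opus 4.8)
The plan is to prove the two implications separately, since compactness and sequential compactness are genuinely distinct notions in arbitrary topological spaces and coincide only because of the metric structure. Throughout, let $(M,d)$ denote the metric space in question. I would emphasize at the outset that only the metric properties are used, so the parenthetical ``therefore topological vector space'' is immediate once the metric case is settled.

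For the easy direction, \emph{compactness implies sequential compactness}, I would argue by contradiction. Given a sequence $(x_n)$ with no convergent subsequence, no point of $M$ is a cluster point of the sequence, so every $x\in M$ admits an open ball $B(x,r_x)$ containing $x_n$ for only finitely many indices $n$. The family $\{B(x,r_x):x\in M\}$ is an open cover of $M$; extracting a finite subcover yields finitely many sets, each meeting the sequence only finitely often, whose union is all of $M$. This forces only finitely many indices $n$ to be accounted for, contradicting the fact that every $x_n$ lies in $M$.

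The substantive direction, \emph{sequential compactness implies compactness}, requires two preparatory lemmas and is where the real work lies. First I would show that a sequentially compact metric space is \emph{totally bounded}: were there no finite $\varepsilon$-net for some $\varepsilon>0$, one could inductively choose points pairwise at distance at least $\varepsilon$, producing a sequence with no Cauchy and hence no convergent subsequence. Second, I would establish the \emph{Lebesgue number lemma}: for any open cover $\mathcal{U}$ there is $\delta>0$ such that every $\delta$-ball is contained in some member of $\mathcal{U}$. Otherwise one chooses, for each $n$, a point $x_n$ whose $1/n$-ball lies in no member of $\mathcal{U}$; a convergent subsequence $x_{n_k}\to x$ together with a cover element $U\ni x$ and a ball $B(x,r)\subseteq U$ gives the contradiction, since for large $k$ both $d(x_{n_k},x)$ and $1/n_k$ are below $r/2$, forcing $B(x_{n_k},1/n_k)\subseteq U$.

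Finally I would combine the two lemmas to extract a finite subcover from an arbitrary open cover $\mathcal{U}$: total boundedness supplies finitely many $\delta$-balls covering $M$, with $\delta$ taken to be the Lebesgue number, and each such ball is contained in a single member of $\mathcal{U}$, so the corresponding finitely many members cover $M$. The main obstacle is precisely this reverse implication, and within it the Lebesgue number lemma, which is the one step that genuinely exploits subsequential extraction; total boundedness then packages everything cleanly. I would remark that for the purposes of this paper the sequential formulation is the more directly usable one, since the weak-topology arguments to be carried out in the extended Hilbert space $E$ proceed by extracting weakly convergent subsequences.
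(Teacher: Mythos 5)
Your proof is correct, but note that the paper does not actually prove this proposition: it is listed among several facts from real analysis that the authors merely ``recap,'' with the reader referred to \cite{Royden} for details. Your argument is the classical self-contained one. The easy direction (compactness $\Rightarrow$ sequential compactness) via the no-cluster-point contradiction and a finite subcover is sound, and the substantive direction via the two standard lemmas---total boundedness of a sequentially compact metric space, and the Lebesgue number lemma---is exactly the textbook route; combining a $\delta$-net (with $\delta$ the Lebesgue number) with the lemma yields the finite subcover. So where the paper buys brevity by citation, you buy self-containedness; both are legitimate, and your decomposition correctly isolates the Lebesgue number lemma as the step that genuinely uses subsequential extraction.

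One caution about your closing remark. The place where the paper invokes this proposition is Lemma \ref{appdxlemma1}, establishing compactness of the finite-dimensional sets $\widetilde\Lambda_n$ by proving sequential compactness with respect to the \emph{norm} induced by the inner product $\left<\cdot,\,\cdot\right>_E$---a metric topology, so the proposition applies there. The later step in Theorem \ref{existencethm}, where a weakly convergent subsequence is extracted from the bounded sequence $X^{n,*}$, is a separate argument: the weak topology of an infinite-dimensional Hilbert space is not metrizable, so the metric-space equivalence you prove does not govern that step. This does not affect the validity of your proof, but it misidentifies where (and in which topology) the proposition is actually used.
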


Theorem \ref{browderthm} is immediately applicable for showing that $VI\big(F,\,\Theta,\,\widetilde\Lambda,\,t_0,\,t_f\big)$ has a solution if (1) $\mathcal{F}$ is continuous; and (2) $\widetilde \Lambda\subset E$ is compact. Unfortunately, such compactness does not generally hold for the problem we study herein. To overcome such an obstacle, we proceed in a similar way as in \cite{existence} by considering finite-dimensional approximations of the underlying infinite-dimensional Hilbert space. Another major hurdle that stymied many researchers is the {\it a priori} upper bound of path flows. Such bound is important for a topological argument that we will rely on in the proof, but does not arise from any physical or behavioral perspective of traffic modeling. In fact, as observed by \cite{BH}, the equilibrium path flows could very well become unbounded or even produce dirac-delta, if no additional assumptions are made regarding exogenous parameters of the Nash-like game, such as travelers' disutility functions.

The following assumptions are key to our analysis of path flows. The first assumption, {\bf (A1)}, poses hypothesis on drivers' perceived arrival costs; the second assumption, {\bf (A2)}, is concerned with the model of link/path dynamics and can be easily satisfied by existing  models such as the Vickrey model \citep{Vickrey, GVM1, GVM2}, the LWR-Lax model \citep{FHNMY}, and the Lighthill-Whitham-Richards model \citep{CTM, LW, Richards}. 
\\

\noindent {\bf (A1).} The function $f(\cdot )$ appearing in \eqref{cost} is
continuous on $[t_0,\,t_f]$ and satisfies 
\begin{equation}\label{fassumption}
f(t_2)-f(t_1)~\geq~\Delta (t_2-t_1) \qquad \forall t_0~\leq~t_1~<~t_2~\leq~t_f
\end{equation}
for some $\Delta >-1$\\

\noindent {\bf (A2).} The {\it first-in-first-out} (FIFO) rule is obeyed on a path level. In addition, each link $a\in \mathcal{A}$ in the network has
a finite exit flow capacity $M_{a}~<~\infty $. \\

\begin{remark}
Assumption {\bf (A1)} is employed in an in-depth analysis of the network flow later in the proof of Theorem \ref{existencethm}. The reader is referred to \cite{existence} for the motivation and generality of such an assumption. Notice that  if $f(\cdot)$ is continuously differentiable, then {\bf (A1)} is equivalent to requiring that $f'(t) > -1,~t\in[t_0,\,t_f]$. 
\end{remark}

\noindent With the preceding preparation, we are now ready to state and prove the main result of this section.

\begin{theorem}\label{existencethm}
Assume that the effective delay operator $\Psi: \Lambda_0\rightarrow \big(L^2[t_0,\,t_f]\big)^{|\mathcal{P}|}$ is continuous. In addition, let assumptions {\bf (A1)} and {\bf (A2)} hold. If the inverse demand function $\Theta: \Re_+^{|\mathcal{W}|}\rightarrow \Re_{++}^{|\mathcal{W}|}$ is continuous, then the variational inequality $VI\big(F,\,\Theta,\,\widetilde\Lambda,\,t_0,\,t_f\big)$ has a solution.
\end{theorem}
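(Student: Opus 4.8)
The plan is to cast \eqref{viextend} into a form to which Browder's theorem (Theorem \ref{browderthm}) applies. That theorem needs two ingredients: continuity of the operator $\mathcal{F}=\big(\Psi(\cdot,h),\,\Theta^-(Q)\big)$, which is immediate from the assumed continuity of $\Psi$ and of $\Theta$; and compactness of the feasible region, which fails for $\widetilde\Lambda$ both because $\widetilde\Lambda$ is unbounded and because $E$ is infinite-dimensional. I would therefore (i) confine every solution a priori to a bounded subset $\widetilde\Lambda_B\subset\widetilde\Lambda$, (ii) recover compactness by restricting to finite-dimensional subspaces on which Browder's theorem genuinely applies, and (iii) pass to a limit.

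The a priori bound is the heart of the argument and the step I expect to be hardest; it is precisely where \textbf{(A1)} and \textbf{(A2)} are used. The idea is to exploit the equilibrium characterization of Definition \ref{dueelasticdef}: on the support of each $h_p^*$ every used departure time carries the common cost, so for two used times $t_1<t_2$ we have $\Psi_p(t_1,h^*)=\Psi_p(t_2,h^*)$. Expanding this through \eqref{cost}, using the path-level FIFO rule of \textbf{(A2)} to order the arrival times, and invoking the lower slope bound $f(t_2)-f(t_1)\ge\Delta(t_2-t_1)$ of \textbf{(A1)}, I would obtain a one-sided Lipschitz estimate on the path delay $D_p(\cdot,h^*)$ over the used set; since $\Delta>-1$, the associated arrival-time map then has a derivative bounded above by $1/(1+\Delta)$. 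Because the arrival rate at the terminal arc cannot exceed its finite exit capacity $M_a$ from \textbf{(A2)}, this converts into a uniform pointwise bound $h_p^*\le M_a/(1+\Delta)$ that depends only on $\Delta$ and the capacities, not on the demand level. Integrating over the finite horizon then bounds the demands $Q_{ij}^*=\sum_{p\in\mathcal{P}_{ij}}\int_{t_0}^{t_f}h_p^*\,dt$ as well, so all solutions lie in a bounded, closed, convex set $\widetilde\Lambda_B$, and it suffices to solve \eqref{viextend} over $\widetilde\Lambda_B$.

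Next I would follow \cite{existence} and approximate. Choose an increasing family of finite-dimensional subspaces $E_1\subset E_2\subset\cdots\subset E$ whose union is dense in $E$, and put $\widetilde\Lambda_n=\widetilde\Lambda_B\cap E_n$. Each $\widetilde\Lambda_n$ is a closed, bounded, convex subset of a finite-dimensional space, hence compact (Proposition \ref{propsition3}), and $\mathcal{F}$ is continuous on it; Browder's theorem therefore produces, for each $n$, a point $X_n^*=(h_n^*,Q_n^*)\in\widetilde\Lambda_n$ satisfying $\langle\mathcal{F}(X_n^*),\,X-X_n^*\rangle_E\ge0$ for all $X\in\widetilde\Lambda_n$.

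Finally I would pass to the limit, which is the second delicate point. Since all $X_n^*$ lie in the bounded set $\widetilde\Lambda_B$, weak sequential compactness in the Hilbert space $E$ yields a subsequence with $X_n^*\rightharpoonup X^*\in\widetilde\Lambda_B$; moreover the uniform bound $h_p^*\le M_a/(1+\Delta)$ makes the cumulative curves $\int_{t_0}^{\cdot}h_p^*$ uniformly Lipschitz, so by Arzel\`a--Ascoli a further subsequence has these curves converging uniformly and the demands $Q_n^*\to Q^*$ converging. The obstacle is that weak convergence of $h_n^*$ alone does not force $\Psi(\cdot,h_n^*)$ to converge; I would resolve this by using that in the dynamic network loading models admitted by \textbf{(A2)} the path delays depend on the path flows only through their (uniformly convergent) cumulative curves and arc volumes, so that the continuity of $\Psi$ delivers $\Psi(\cdot,h_n^*)\to\Psi(\cdot,h^*)$. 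Taking $X\in\bigcup_n E_n\cap\widetilde\Lambda_B$, letting $n\to\infty$ in the finite-dimensional inequality, and then using the density of $\bigcup_n E_n$ together with continuity of the inner product, I would extend $\langle\mathcal{F}(X^*),X-X^*\rangle_E\ge0$ to all $X\in\widetilde\Lambda_B$, and hence---by the a priori bound---to all $X\in\widetilde\Lambda$. Thus $X^*=(h^*,Q^*)$ solves $VI(\mathcal{F},\Theta,\widetilde\Lambda,t_0,t_f)$.
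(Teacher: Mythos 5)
Your architecture (continuity of $\mathcal{F}$, finite-dimensional truncations, Browder's theorem on each truncation, weak limit) matches the paper's, and your handling of the limit passage is in places more explicit than the paper's own. However, there is a genuine circularity in where you derive the a priori flow bound, and it breaks the proof. You obtain $h_p^*\le M_a/(1+\Delta)$ from the equilibrium characterization of Definition \ref{dueelasticdef} (equal costs at all used departure times), i.e.\ from conditions satisfied by solutions of the \emph{full} variational inequality --- precisely the objects whose existence is the thing to be proved. Such a bound is only a necessary condition on solutions, should any exist; it cannot justify the step ``it suffices to solve \eqref{viextend} over $\widetilde\Lambda_B$.'' A solution of the VI restricted to $\widetilde\Lambda_B$ satisfies the inequality only against competitors in $\widetilde\Lambda_B$; to promote it to a solution over all of $\widetilde\Lambda$ you must show that \emph{that} point --- the solution of the truncated problem --- stays strictly inside the truncation. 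But truncated-VI solutions need not satisfy Definition \ref{dueelasticdef}: where the cap $h\le B$ binds, the complementarity structure of the restricted problem allows $\Psi_p(t,h^*)$ to lie strictly \emph{below} the common cost, so the equal-cost argument is unavailable exactly where it is needed. Your closing clause ``and hence---by the a priori bound---to all $X\in\widetilde\Lambda$'' therefore assumes what was to be shown.

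There is also a secondary technical failure in the same step: even granting a strict bound on the truncated solution, the standard interior-point extension (slide from $X^*$ a short distance toward an arbitrary $X\in\widetilde\Lambda$ while remaining in $\widetilde\Lambda_B$) does not work with a \emph{pointwise} cap, because an arbitrary competitor $h\in L^2_+$ may be unbounded, so $(1-\epsilon)h^*+\epsilon h$ violates the cap on a set of positive measure for every $\epsilon>0$; one would need an $L^2$-norm-ball truncation instead. The paper avoids both problems by never imposing a cap: the truncations $\widetilde\Lambda_n$ (piecewise-constant flows with demands bounded by $U_{ij}$) are already compact, since each constant value is automatically bounded by $\max_{(i,j)}U_{ij}\cdot n/(t_f-t_0)$ (Lemma \ref{appdxlemma1}); Browder's theorem then yields approximate solutions $X^{n,*}$, and Lemma \ref{appdxlemma2} runs your FIFO/exit-capacity/\textbf{(A1)} estimate on the $X^{n,*}$ themselves, using only the interval-wise optimality condition \eqref{eqn3} that the finite-dimensional VI furnishes, to get the uniform-in-$n$ bound $\mathcal{M}$ that makes the weak limit possible. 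In short, your physical estimate is exactly the right one, but it must be applied to the approximating solutions through their own optimality conditions, not to hypothetical exact equilibria. (A minor further remark: your resolution of the convergence of $\Psi(\cdot,h_n^*)$ via cumulative curves imports structural assumptions on the network loading beyond the stated continuity of $\Psi$; the paper at this point simply invokes continuity of $\mathcal{F}$ against the weak convergence $X^{n,*}\rightharpoonup X^*$.)
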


\begin{proof}
Given that $\Psi$ and $\Theta$ are both continuous mappings, it is straightforward to verify by definition \eqref{calfdef} that the  mapping $\mathcal{F}: \widetilde \Lambda \rightarrow E$ is also continuous. 

Since Theorem \ref{browderthm}  cannot be directly applied to obtain a solution of the infinite-dimensional VI, let us instead employ the technique from \cite{existence} by considering finite-dimensional approximations of $\widetilde\Lambda$. More specifically, consider for each $n\geq 1$ the uniform partition of $[t_0,\,t_f]$ by $n$ sub-intervals $I^1,\ldots, I^n$. Define the finite-dimensional subset of $\widetilde\Lambda$:
\begin{multline}
\widetilde\Lambda_n~\doteq~\Bigg\{\big(h_1(\cdot),\ldots, h_{|\mathcal{P}|}(\cdot),\,Q_1,\ldots, Q_{|\mathcal{W}|}\big)\in \widetilde\Lambda: 
\\
 h_i(\cdot)~\hbox{is  constant on each } I^j\qquad \forall 1\leq j\leq n,\quad \forall 1\leq i\leq |\mathcal{P}|   \Bigg\}
\end{multline}
\noindent Moreover, it is not restrictive to assume that there is an upper bound on the elastic demand for each origin-destination pair. That is, there exists a vector $U=\big(U_{ij}\big)\in \Re_{++}^{|\mathcal{W}|}$ such that 
$$
0~\leq~Q_{ij}~\leq~U_{ij} \qquad \forall (i,\,j)\in\mathcal{W},\qquad  \forall (h,\,Q)\in\widetilde\Lambda
$$

\noindent We can then show that $\widetilde\Lambda_n$ defined as such is convex and compact in $\widetilde\Lambda$ for each $n\geq 1$. The proof is postponed until Lemma \ref{appdxlemma1} of the Appendix for the clarity and concision of our presentation.

We are now in a position where Theorem \ref{browderthm} applies to each $\widetilde\Lambda_n$. In other words, fix $n\geq 1$, there exists some $X^{n,*}=(h^{n,*},\,Q^{n,*})\in\widetilde\Lambda_n$ such that 
\begin{equation}\label{chapExistence:eqn2}
\left<\mathcal{F}\left(X^{n, *}\right),\,X^n-X^{n,*}\right>_E~\geq~0\qquad\forall X^n\in\widetilde\Lambda_n
\end{equation}

\noindent As a consequence of  \eqref{chapExistence:eqn2}, we have
\begin{align}
&\sum_{p\in\mathcal{P}}\int_{t_0}^{t_f}\Psi_p\big(t,\,h^{n,*}\big) h^{n,*}(t)\,dt-\sum_{(i,\,j)\in\mathcal{W}}\Theta_{ij}[Q^{n,*}]Q_{ij}^{n,*} \nonumber
\\
\label{eqn1}
~\leq~&\sum_{p\in\mathcal{P}}\int_{t_0}^{t_f}\Psi_p\big(t,\,h^{n,*}\big) h^{n}(t)\,dt-\sum_{(i,\,j)\in\mathcal{W}}\Theta_{ij}[Q^{n,*}]Q_{ij}^n
\end{align}
for all $\big(h^n,\,Q^n\big)\in\widetilde\Lambda_n$. In particular, for all $h^n\in \big(L^2_+[t_0,\,t_f]\big)^{|\mathcal{P}|}$ such that 
$$
\sum_{p\in\mathcal{P}_{ij}}\int_{t_0}^{t_f}h_p^n(t)\,dt~=~Q^{n,*}_{ij}\qquad\forall (i,\,j)\in\mathcal{W}
$$
\noindent inequality \eqref{eqn1} becomes 
\begin{equation}\label{eqn2}
\sum_{p\in\mathcal{P}}\int_{t_0}^{t_f}\Psi_p\big(t,\,h^{n,*}\big) h^{n,*}(t)\,dt~\leq~\sum_{p\in\mathcal{P}}\int_{t_0}^{t_f}\Psi_p\big(t,\,h^{n,*}\big) h^{n}(t)\,dt
\end{equation}
Recall that $h^{n,*}(\cdot)$ is piecewise constant, thus \eqref{eqn2} implies the following: 
\begin{equation}\label{eqn3}
h^{n,*}_p(t)~>~0,\, \quad t\in I^k ~\Longrightarrow~ \int_{I^k}\Psi_p(t,\,h^{n,*}) \,dt ~=~\min_{1\leq j\leq n} \int_{I^j}\Psi_p(t,\,h^{n,*}) \,dt
\end{equation}
\noindent for all $p\in\mathcal{P}_{ij}$, $(i,\,j)\in\mathcal{W}$. We invoke Lemma \ref{appdxlemma2} from the Appendix and find a constant $\mathcal{M}\in \Re_{++}$ such that 
$$
h_p^{n,*}(t)~\leq~\mathcal{M}\qquad\forall t\in[t_0,\,t_f],\quad\forall p\in\mathcal{P},\quad\forall n\geq 1
$$
With the uniform upper bound $\mathcal{M}$ on the path flows $h^{n,*}$ and upper bound $U_{ij}$ for each $Q_{ij}$, $(i,\,j)\in\mathcal{W}$, one can extract a subsequence $\left\{X^{n_k,*}\right\}$ out of $\left\{X^{n,*}\right\}$ such that $X^{n_k,*}$ converges weakly to some $X^*\in\widetilde\Lambda$ as $k\rightarrow\infty$, where the weak topology on $E$ is defined via the inner product $\left<\cdot,\,\cdot\right>_E$. 

In view of \eqref{chapExistence:eqn2}, by strong continuity of $\mathcal{F}$ and the weak convergence $X^{n,*}\rightarrow X^*$, we conclude that
$$
\left<\mathcal{F}(X^*),\,X-X^*\right>_E~\geq~0\qquad\forall X\in\widetilde\Lambda
$$  
\end{proof}

\begin{remark}
The existence of an E-DUE requires that the embedded network loading procedure yields a delay operator that is continuous. Such regularity condition coincides with that of the fixed demand case, see \cite{existence}. Theorem \ref{existencethm} subsumes all notions of simultaneous-route-and-departure choice user equilibrium regardless of the arc dynamic, flow propagation and delay model employed. 
\end{remark}

\section{Conclusion}
We have shown that dynamic network user equilibrium based on simultaneous departure time and route choice in the presence of elastic travel demand may be formulated as a variational inequality (VI). Such a result is a nontrivial extension of the VI formulation established by \cite{Friesz1993} for dynamic user equilibrium with fixed travel demand. The proof relies on a measure-theoretic argument and provides unique insights on the qualitative properties of E-DUE such as existence, which is also established in this paper.

Similar to the fixed demand case, existence of the elastic demand case is easily analyzed using the framework of Brouwer's fixed point theorem for infinite-dimensional variational inequalities proposed in \cite{Browder}. Nevertheless, the variational inequality for the elastic demand case has both infinite- and finite-dimensional terms. The proof of existence requires construction of an extended space that subsumes both parts, as well as an appropriate choice of inner product  that allows compactness and weak topology to be defined.  It is significant that our proof does not rely on the {\it a priori} upper bound on path flows.

We are conducting further investigation of the E-DUE problem in terms of computation, based on the DVI formulation. In particular, the optimal control framework inherent in the DVI formulation allows us to device an iterative scheme based on a fixed-point-problem reformulation. Convergence of such a scheme requires some sort of monotonicity to be articulated and proved, such as those mentioned in \cite{Friesz1993}, which is the focus of ongoing research.

\section{Appendix}
\begin{lemma}\label{appdxlemma1} Assume that there exists a vector $U=\big(U_{ij}\big)\in \Re_{++}^{|\mathcal{W}|}$ such that 
$$
0~\leq~Q_{ij}~\leq~U_{ij} \qquad \forall (i,\,j)\in\mathcal{W},\qquad  \forall (h,\,Q)\in\widetilde\Lambda
$$
Then for each $n\geq 1$, the subset
\begin{multline}
\widetilde\Lambda_n~\doteq~\Bigg\{\big(h_1(\cdot),\ldots, h_{|\mathcal{P}|}(\cdot),\,Q_1,\ldots, Q_{|\mathcal{W}|}\big)\in \widetilde\Lambda: 
\\
 h_i(\cdot)~\hbox{is  constant on each } I^j\qquad \forall 1\leq j\leq n,\quad \forall 1\leq i\leq |\mathcal{P}|   \Bigg\}
\end{multline}
is compact and convex in $\widetilde\Lambda$. 
\end{lemma}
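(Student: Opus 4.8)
The plan is to exploit the fact that, although $\widetilde\Lambda_n$ sits inside the infinite-dimensional space $E$, the piecewise-constant requirement confines it to a finite-dimensional linear subspace, so that convexity and compactness reduce to elementary Euclidean facts. First I would set up the identification. A path flow $h_p(\cdot)$ that is constant on each of the uniform subintervals $I^1,\ldots,I^n$ is completely determined by the vector $\big(c_p^1,\ldots,c_p^n\big)\in\Re^n$ of its values, where $c_p^k$ denotes the constant value of $h_p$ on $I^k$. Concatenating over $p\in\mathcal{P}$ and adjoining $Q=(Q_{ij})$ yields a linear isomorphism between the ambient space of piecewise-constant pairs and $\Re^{n|\mathcal{P}|}\times\Re^{|\mathcal{W}|}$. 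Because this subspace is finite-dimensional, the $L^2$-based norm inherited from $E$ is equivalent to the Euclidean norm on the coefficients, so the relative topology that $\widetilde\Lambda_n$ inherits from $E$ coincides with the Euclidean topology under the identification. Hence it suffices to prove that the image of $\widetilde\Lambda_n$ is convex and compact in $\Re^{n|\mathcal{P}|}\times\Re^{|\mathcal{W}|}$.

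For convexity I would simply observe that every constraint defining $\widetilde\Lambda_n$ is affine in the coordinates $\big(c_p^k,\,Q_{ij}\big)$. The sign constraints $c_p^k\geq 0$ and the demand bounds $0\leq Q_{ij}\leq U_{ij}$ are linear inequalities, while each conservation equation $\sum_{p\in\mathcal{P}_{ij}}\int_{t_0}^{t_f}h_p\,dt=Q_{ij}$ becomes the linear equation $\tfrac{t_f-t_0}{n}\sum_{p\in\mathcal{P}_{ij}}\sum_{k=1}^n c_p^k=Q_{ij}$. An intersection of finitely many half-spaces and hyperplanes is convex, so $\widetilde\Lambda_n$ is convex.

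For compactness I would invoke the Heine--Borel theorem and check closedness and boundedness. Closedness is immediate, since the image is an intersection of finitely many closed half-spaces and hyperplanes, i.e.\ a solution set of non-strict inequalities and equalities in the coordinates. For boundedness, the demand bounds give $0\leq Q_{ij}\leq U_{ij}$ directly; for the flow coordinates, nonnegativity together with the conservation equation yields, for any $p\in\mathcal{P}_{ij}$ and any $k$, $\tfrac{t_f-t_0}{n}\,c_p^k\leq\int_{t_0}^{t_f}h_p\,dt\leq\sum_{q\in\mathcal{P}_{ij}}\int_{t_0}^{t_f}h_q\,dt=Q_{ij}\leq U_{ij}$, whence $0\leq c_p^k\leq \tfrac{n}{t_f-t_0}\,U_{ij}$. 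Thus every coordinate lies in a bounded interval, so the image is a bounded, closed subset of $\Re^{n|\mathcal{P}|}\times\Re^{|\mathcal{W}|}$ and therefore compact; transporting back through the isomorphism shows that $\widetilde\Lambda_n$ is compact in the topology induced by $E$.

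The only delicate point is the reduction carried out in the first paragraph, namely that compactness in the finite-dimensional Euclidean picture genuinely transfers to compactness in the $E$-topology. This rests on the equivalence of all norms on a finite-dimensional space, which guarantees that the coefficient-to-function map is a homeomorphism onto the piecewise-constant subspace; once that is secured, the convexity argument and the Heine--Borel verification are routine.
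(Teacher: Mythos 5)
Your proposal is correct and takes essentially the same route as the paper: both identify $\widetilde\Lambda_n$ with a subset of $\Re^{n|\mathcal{P}|}\times\Re^{|\mathcal{W}|}$ through the piecewise-constant coefficients, derive the uniform coordinate bound $c_p^k\leq \frac{n}{t_f-t_0}\,U_{ij}$ from the demand cap $U$, and conclude compactness from finite-dimensionality, the only difference being that the paper argues via sequential compactness and Bolzano--Weierstrass while you invoke Heine--Borel after noting equivalence of norms on the finite-dimensional subspace. A minor point in your favor: you verify closedness explicitly, whereas the paper's sequential argument extracts a convergent subsequence but never checks that its limit lies in $\widetilde\Lambda_n$, which requires exactly the closedness observation you supply.
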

\begin{proof}
We begin with verifying convexity. Let $\widehat X=(\widehat h,\,\widehat Q)$ and $\overline{X}=(\overline h,\,\overline{Q})$ be any two elements of $\widetilde\Lambda_n$. Given any $\alpha\in(0,\,1)$, we have that
$$
\sum_{p\in\mathcal{P}_{ij}}\int_{t_0}^{t_f}\Big(\alpha\,\widehat h_p(t)\,dt+(1-\alpha) \, \overline h_p(t)\Big)\,dt~=~\alpha\, \widehat Q_{ij}+(1-\alpha)\, \overline Q_{ij}\qquad \forall (i,\,j)\in\mathcal{W}
$$
Moreover, $ \alpha\, \widetilde h_p(\cdot)+(1-\alpha)\,\overline h_p(\cdot)$ clearly remains constant on each sub-interval $I^j,\,j=1,\ldots, n$, for all $p\in\mathcal{P}$. We thus conclude that $\alpha \widehat X+(1-\alpha)\overline X\in \widetilde \Lambda_n$.

Next, let us investigate compactness. From now on let us fix $n\geq 1$. In light of Proposition  \ref{propsition3}, it suffices to establish sequential compactness for $\widetilde\Lambda_n$. We consider an arbitrary infinite sequence $\big\{X^k\big\}_{k\geq 1}\subset \widetilde\Lambda_n$ where $X^k=\big(h^k,\,Q^k\big)$. For each $k\geq 1$ and $p\in\mathcal{P}$, let $\mu_p^k=(\mu_{p,j}^k)\in \Re_+^{n}$ be such that
$$
\mu_{p,j}^k~=~h^k_p(t) \qquad t\in I^j,\qquad \forall j~=~1,\,\ldots,\,n
$$
We then define $\mu^k\in\Re_+^{n |\mathcal{P}|}$ to be the concatenation of all vectors $\mu_p^k,\,p\in\mathcal{P}$. We also notice that the vectors $\mu^k,\,k\geq 1$ are uniformly bounded by the constant 
$$
\max_{(i,\,j)\in\mathcal{W}} U_{ij} \cdot { n\over t_f-t_0 }
$$
Thus by the Bolzano-Weierstrass theorem, there exists a convergent subsequence $\big\{\mu^{k'}\big\}_{k'\geq 1}$. It is immediately verifiable that the corresponding subsequence $h^{k'}$ converges uniformly on $[t_0,\,t_f]$ and also in the $L^2$ norm. 

Moreover, by virtue of the uniform bounds $U_{ij},\,(i,\,j)\in\mathcal{W}$, there exists a further subsequence $\big\{k''\big\}\subset\big\{k'\big\}$ such that  $Q^{k''}$ is a convergent subsequence  according to the Bolzano-Weierstrass theorem. Thus, the subsequence $\big\{X^{k''}\big\}_{k''\geq 1}$ converges with respect to the norm induced by inner product \eqref{inducedipdef}. 
\end{proof}

\vspace{0.2 in}

\begin{lemma}\label{appdxlemma2}
Assume that {\bf (A1)} and {\bf (A2)} hold. Then there exists $\mathcal{M}>0$ such that for all $\big(h^{n,*},\,Q^{n,*}\big)$ satisfying \eqref{chapExistence:eqn2}, there hold
\begin{equation}\label{eqn4}
h_p^{n,*}(t)~\leq~\mathcal{M}\qquad  \forall t\in[t_0,\,t_f],\qquad \forall p\in\mathcal{P},\qquad\forall n\geq 1
\end{equation}
\end{lemma}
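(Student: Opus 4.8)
The plan is to exploit the equilibrium structure encoded in \eqref{eqn3}, together with assumptions {\bf (A1)} and {\bf (A2)}, to bound the departure rate on every sub-interval where it is positive; on the remaining sub-intervals the bound $h_p^{n,*}=0\leq\mathcal{M}$ is trivial. The central device is a change of viewpoint from delay to arrival time. Writing $\tau_p(t)\doteq t+D_p(t,h)$ for the clock arrival time of flow departing on path $p$ at time $t$, and setting $g(\tau)\doteq\tau+f(\tau-T_A)$, the effective delay \eqref{cost} rewrites as
$$\Psi_p(t,h)~=~g\big(\tau_p(t)\big)-t.$$
Assumption {\bf (A1)} says precisely that $g$ is strictly increasing, with $g(\tau_2)-g(\tau_1)\geq(1+\Delta)(\tau_2-\tau_1)$ whenever $\tau_1<\tau_2$, where $1+\Delta>0$.

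The heart of the argument is a conservation-versus-capacity inequality. Fix a path $p\in\mathcal{P}_{ij}$ and a sub-interval $I^k$ on which $h_p^{n,*}$ is positive, hence constant, say equal to $H$. By {\bf (A2)}, the first-in-first-out rule makes $\tau_p(\cdot)$ nondecreasing, and the finite exit capacities $M_a$ aggregate into a finite discharge rate $M_p\doteq\min_{a\in p}M_a$ for flow traversing $p$; consequently the volume arriving during any arrival window $[\tau_p(t^k_-),\tau_p(t^k_+)]$ is at most $M_p\,(\tau_p(t^k_+)-\tau_p(t^k_-))$, where $t^k_-,t^k_+$ denote the endpoints of $I^k$. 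On the other hand, flow conservation along the path equates this arrival volume with the departure volume $H\,|I^k|$ over $I^k$, giving
$$H\,|I^k|~\leq~M_p\,\big(\tau_p(t^k_+)-\tau_p(t^k_-)\big).$$

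It then remains to bound the arrival spread using \eqref{eqn3}. Since $I^k$ minimizes $\int_{I^j}\Psi_p\,dt$ over $j$, the effective delay cannot be lowered on a neighbouring interval; in the continuous limit this forces $\Psi_p$ to be constant on the support, i.e. $g(\tau_p(t))-t\equiv v_p$, so that $g(\tau_p(t^k_+))-g(\tau_p(t^k_-))=|I^k|$. Combining with the lower slope bound from {\bf (A1)} yields $\tau_p(t^k_+)-\tau_p(t^k_-)\leq|I^k|/(1+\Delta)$, and substituting into the conservation inequality gives $H\leq M_p/(1+\Delta)$. Taking $\mathcal{M}\doteq\max_{p\in\mathcal{P}}M_p/(1+\Delta)$, which is finite and independent of $n$, establishes \eqref{eqn4}.

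The main obstacle I anticipate is making the last step rigorous at the discrete level: condition \eqref{eqn3} asserts only minimality of the integral $\int_{I^k}\Psi_p\,dt$ over sub-intervals, not pointwise constancy of $\Psi_p$, so the arrival spread on $I^k$ must be controlled directly from this discrete minimization, carrying an $O(|I^k|)$ discretization error that is absorbed into the estimate and vanishes uniformly as the partition refines. Equally delicate are the two loading-model facts that {\bf (A2)} must deliver, namely that FIFO renders $\tau_p(\cdot)$ monotone and that the link capacities $M_a$ genuinely cap the path discharge rate $M_p$ consistently with conservation along $p$. Once these structural properties are in hand, the conservation-versus-capacity inequality and the slope estimate from {\bf (A1)} combine cleanly to produce the uniform bound $\mathcal{M}$.
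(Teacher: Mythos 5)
Your opening step --- the conservation-versus-capacity inequality --- is sound and is in fact the same mechanism the paper uses: by FIFO and the finite exit capacities of \textbf{(A2)}, the volume $H\,|I^k|$ departing on $I^k$ must arrive inside the window $[\tau_p(t^k_-),\,\tau_p(t^k_+)]$ at a bounded rate, so the arrival spread satisfies $\tau_p(t^k_+)-\tau_p(t^k_-)\geq H\,|I^k|/M^{max}$ with $M^{max}\doteq\max_{a\in\mathcal{A}}M_a$. (A small caution: the instantaneous discharge rate is capped by the exit capacity of the \emph{last} link of the path, not necessarily by $\min_{a\in p}M_a$; the safe constant is $M^{max}$, which is what the paper uses.) The genuine gap is your second step. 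Condition \eqref{eqn3} asserts only that $\int_{I^k}\Psi_p\,dt$ is minimal among sub-intervals; at any fixed $n$ it does \emph{not} imply that $\Psi_p$ is constant, or constant up to $O(|I^k|)$, on $I^k$. Indeed your own inequalities show what is at stake: by the conservation-capacity bound and \textbf{(A1)}, the rise of $\Psi_p$ across $I^k$ is at least $(1+\Delta)H|I^k|/M^{max}-|I^k|$, which is of the order of the \emph{departed volume divided by capacity}, not of order $|I^k|$. Consequently, asserting that the arrival spread is at most $C|I^k|/(1+\Delta)$ is, via your own conservation inequality, exactly equivalent to asserting $H\leq CM^{max}/(1+\Delta)$ --- that is, the ``$O(|I^k|)$ discretization error'' you propose to absorb \emph{is} the conclusion of the lemma. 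The proposal is circular precisely where the work must be done. It is circular in a second, conceptual way as well: pointwise constancy of $\Psi_p$ on the support is a property of the limiting continuous equilibrium, and extracting that limit is exactly what the uniform bound \eqref{eqn4} is needed for. Finally, the lemma demands a single $\mathcal{M}$ valid for \emph{every} $n\geq 1$; an argument whose error ``vanishes as the partition refines'' addresses the wrong regime.

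What actually closes the gap --- and this is how the paper argues --- is a contradiction argument that never needs near-constancy. Suppose $h_q^{m,*}\equiv\lambda>\mathcal{M}$ on $I^j$ with $\mathcal{M}>3M^{max}/(\Delta+1)$. Because the discrete flow is constant on $I^j$, the conservation-capacity estimate applies to every sub-window $[t^j,t]$, $t\in I^j$, so \textbf{(A1)} forces the effective delay to rise \emph{linearly}:
\[
\Psi_q(t,h^{m,*})-\Psi_q(t^*,h^{m,*})~\geq~(\Delta+1)\frac{\lambda}{M^{max}}(t-t^j)-(t-t^*)\qquad\forall t\in I^j,
\]
where $t^*$ is an essential supremum point of $\Psi_q$ on the \emph{left} neighbour $I^{j-1}$. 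Integrating over $I^j$ produces the term $\tfrac{1}{2}(\Delta+1)\lambda|I^j|^2/M^{max}$, which dominates the slack $\tfrac{3}{2}|I^j|^2$ coming from $|t-t^*|\leq 2|I^j|$ as soon as $\lambda>3M^{max}/(\Delta+1)$; since $|I^j|\,\Psi_q(t^*,h^{m,*})\geq\int_{I^{j-1}}\Psi_q\,dt$, this yields $\int_{I^j}\Psi_q\,dt>\int_{I^{j-1}}\Psi_q\,dt$, contradicting \eqref{eqn3}. Two features of this argument are essential and absent from your sketch: the comparison is one-sided, with the left neighbour (comparing to the right cannot work, since a large pulse on $I^j$ elevates $\Psi_q$ on all later intervals, so minimality against $I^{j+1}$ is never violated); and both sides of the final inequality scale like $|I^j|^2$, so the constant $3M^{max}/(\Delta+1)$ is independent of $n$ --- uniformity comes from this scale invariance, not from refinement of the partition.
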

\begin{proof}
In view of {\bf (A2)}, we are prompted to define the following constant
$$
M^{max}~\doteq~\max_{a\in\mathcal{A}} M_a~<~+\infty
$$
where $\mathcal{A}$ is the set of links of the network. Recalling the constant $\Delta$ from {\bf (A1)}, we choose constant $\mathcal{M}$ such that
$$
\mathcal{M}~>~{3 M^{max}\over \Delta+1}
$$
\noindent We claim that \eqref{eqn4} holds for such $\mathcal{M}$.  Otherwise, if \eqref{eqn4} fails, there must exist some $m\geq 1,\, q\in\mathcal{P}$ and $1\leq j\leq m$ such that
$$
h_q^{m,*}(t)~\equiv~\lambda~>~\mathcal{M}\qquad t\in I^j
$$
Without losing generality, we assume that $j> 1$ and consider the interval $I^{j-1}$. By possibly modifying the value of the function $\Psi_q(\cdot,\,h^{m,*})$ at one point without changing the measure-theoretic nature of the problem, we obtain $t^*\in I^{j-1}$ such that
$$
\Psi_q\big(t^*,\,h^{m,*}\big)~=~\sup_{t\in I^{j-1}} \Psi_q\big(t,\,h^{m,*}\big)
$$
We denote by $\tau_q(t,\, h^{m,*})$ the time of arrival at destination of driver who departs at time $t$ along path $q$. According to the first-in-first-out (FIFO) principle, we deduce that $\forall t\in I^j$,
$$
(t-t^j)\lambda~\leq~\int_{t^*}^t h^{m,*}_q(t)\,dt~\leq~M^{max}\big(\tau_q(t,\,h^{m,*})-\tau_q(t^*,\,h^{m,*})\big)
$$
where $t^j$ is the left boundary of the interval $I^j$. We then have the following estimation:
\begin{align}
&\Psi_q(t,\,h^{m,*})-\Psi_q(t^*,\,h^{m,*})
\\
~=~&D_q(t,\,h^{m,*})+f\big(\tau_q(t,\,h^{m,*})- T_A\big)-D_q(t^*,\,h^{m,*})-f\big(\tau_q(t^*,\,h^{m,*})- T_A\big)   \nonumber
\\
~\geq~&\tau_q(t,\,h^{m,*})-\tau_q(t^*,\,h^{m,*})-(t-t^*)+\Delta\big(\tau_q(t,\,h^{m,*})-\tau_q(t^*,\,h^{m,*})\big)    \nonumber
\\
~=~&(\Delta+1)\big(\tau_q(t,\,h^{m,*})-\tau_q(t^*,\,h^{m,*})\big)-(t-t^*)    \nonumber
\\
\label{eqn5}
~\geq~&(\Delta+1){\lambda\over M^{max}}(t-t^j) -(t-t^*)\qquad \qquad \forall t\in I^j
\end{align}
Integrating \eqref{eqn5} with respect to $t$ over interval $I^j$ shows the following:
\begin{multline}\label{eqn6}
\int_{I^j}\Psi_q(t,\,h^{m,*})\,dt - (t^{j+1}-t^j)\Psi_q(t^*,\,h^{m,*})
\\
~\geq~{(t^{j+1}-t^j)^2\over 2}\cdot {(\Delta+1)\lambda \over M^{max}}+(t^{j+1}-t^j)\cdot\left(t^*-{t^j+t^{j+1}\over 2}\right)
\end{multline}
where $t^j,\,t^{j+1}$ are respectively the left and right boundary of $I^j$. Since $t^*\in I^{j-1}$, we have that 
$$
t^*-{t^j+t^{j+1}\over 2}~\geq~-{3\over 2}\big(t^{j+1}-t^j\big)
$$
With this observation, \eqref{eqn6} becomes
\begin{align*}
\int_{I^j}\Psi_q(t,\,h^{m,*})\,dt - (t^{j+1}-t^j)\Psi_q(t^*,\,h^{m,*})~\geq~&{(t^{j+1}-t^j)^2\over 2}\cdot {(\Delta+1)\lambda \over M^{max}}-{3\over 2}\big(t^{j+1}-t^j\big)^2
\\
~=~&{(t^{j+1}-t^j)^2\over 2}\left({(\Delta+1)\lambda\over M^{max}}-3\right)
\\
~>~&0
\end{align*}
This implies
$$
\int_{I^j}\Psi_q(t,\,h^{m,*})\,dt~>~\int_{I^{j-1}}\Psi(t,\,h^{m,*})\,dt
$$
which yields contradiction to \eqref{eqn3}. This finishes the proof.
\end{proof}

\end{document}